\title{Which mutation classes of quivers have constant number of arrows?}
\author{Sefi Ladkani}
\address{%
Institut des Hautes \'{E}tudes Scientifiques \\
Le Bois Marie, 35, route de Chartres \\
91440 Bures-sur-Yvette, France}
\email{sefil@ihes.fr}
\urladdr{http://www.ihes.fr/\~{}sefil}
\thanks{This work was supported by a European Postdoctoral Institute (EPDI)
fellowship.}
\newcommand{\cP}{\mathcal{P}}
\newcommand{\cQ}{\mathcal{Q}}
\theoremstyle{plain}
\newtheorem{theorem}{Theorem}
\newtheorem{lemma}{Lemma}[section]
\newtheorem{prop}[lemma]{Proposition}
\newtheorem*{prop*}{Proposition}
\newtheorem{cor}[lemma]{Corollary}
\newtheorem*{cor*}{Corollary}
\theoremstyle{definition}
\newtheorem*{defn*}{Definition}
\newtheorem*{quest*}{Question}
\newtheorem{remark}[lemma]{Remark}
\numberwithin{equation}{section}
\begin{document}

\begin{abstract}
We classify the connected quivers with the property that all the
quivers in their mutation class have the same number of arrows. These
are the ones having at most two vertices, or the ones arising from
triangulations of marked bordered oriented surfaces of two kinds:
either surfaces with non-empty boundary having exactly one marked point
on each boundary component and no punctures, or surfaces without
boundary having exactly one puncture.

This combinatorial property has also a representation-theoretic
counterpart: to each such quiver there is a naturally associated
potential such that the Jacobian algebras of all the QP in its mutation
class are derived equivalent.
\end{abstract}

\maketitle

\section{Motivation and Summary of Results}

Quiver mutation is a combinatorial notion introduced by Fomin and
Zelevinsky~\cite{FominZelevinsky02} in their theory of cluster
algebras. Let us briefly recall its definition.

A \emph{quiver} is a directed graph, where multiple arrows between two
vertices are allowed. Throughout this paper, we consider only quivers
without \emph{loops} (arrows starting and ending at the same vertex)
and \emph{$2$-cycles} (i.e.\ pairs of arrows $i \to j$ and $j \to i$).
Let $Q$ be such quiver and let $k$ be a vertex of $Q$. The \emph{quiver
mutation} of $Q$ at $k$ is a new quiver $\mu_k(Q)$ obtained from $Q$ by
performing the following three steps:
\begin{enumerate}
\renewcommand{\theenumi}{\roman{enumi}}
\item
For any pair of arrows of the form $(i \to k, k \to j)$, add a new
arrow $i \to j$;

\item \label{it:mut:rev}
Reverse the direction of all the arrows starting or terminating at $k$;

\item
Remove a maximal set of $2$-cycles.
\end{enumerate}

When no arrow starts at the vertex $k$ (it is then called a
\emph{sink}) or no arrow ends at $k$ (it is then called a
\emph{source}), mutation at $k$ reduces to step~(\ref{it:mut:rev})
above and coincides with the \emph{BGP reflection} considered by
Bernstein, Gelfand and Ponomarev~\cite{BGP73}. Obviously, in this case
the quivers $Q$ and $\mu_k(Q)$ have the same number of arrows.

However, mutation in general does not preserve the number of arrows in
the quivers, as can be seen for example by mutating the left quiver
with two arrows at the vertex $2$ obtaining the right quiver with three
arrows.
\begin{align*}
\xymatrix@=1pc{
& {\bullet_2} \ar[dr] \\
{\bullet_1} \ar[ur] & & {\bullet_3}
}
& &
\xymatrix@=1pc{
& {\bullet_2} \ar[dl] \\
{\bullet_1} \ar[rr] & & {\bullet_3} \ar[ul]
}
\end{align*}

It is therefore interesting to search for quivers with the property
that performing an arbitrary sequence of mutations does not change the
number of arrows. To formulate this more precisely, recall that the
\emph{mutation class} of a quiver $Q$ consists of all the quivers that
can be obtained from $Q$ by performing sequences of mutations.

\begin{quest*}
Which mutation classes have the property that all their quivers have
the same number of arrows?
\end{quest*}

\subsection{Combinatorial results}

It turns out that the answer to this question is closely related with
the analysis of mutation classes of quivers arising from triangulations
of marked bordered oriented surfaces as introduced by Fomin, Shapiro
and Thurston~\cite{FST08}. Recall that these consist of pairs $(S,M)$
where $S$ is a compact connected oriented Riemann surface and $M
\subset S$ is a finite set of \emph{marked points} containing at least
one point from each connected component of the boundary of $S$ (which
might be empty). The homeomorphism type of $(S,M)$ is governed by the
following discrete data:
\begin{itemize}
\item
the \emph{genus} $g \geq 0$ of the surface $S$;

\item
the number $b \geq 0$ of connected components of its boundary;

\item
the number of marked points on each boundary component;

\item
the number of marked points not on the boundary (called
\emph{punctures}).
\end{itemize}

The following mutation classes will play significant role in our
considerations.
\begin{defn*}
Let $g, b \geq 0$ such that $(g,b) \not \in \{(0,0), (0,1)\}$.
\begin{enumerate}
\renewcommand{\theenumi}{\alph{enumi}}
\item
If $b=0$, denote by $\cQ_{g,0}$ the mutation class consisting of the
quivers arising from triangulations of a surface without boundary of
genus $g$ with one puncture.

\item
If $b>0$, denote by $\cQ_{g,b}$ the
mutation class consisting of the quivers arising from triangulations
of a surface of genus $g$ with $b$ boundary components and exactly
one marked point on each boundary component.
\end{enumerate}
\end{defn*}

A procedure to produce explicit members from the classes $\cQ_{g,b}$
when $b>0$ has been described in our previous work~\cite{Ladkani11}.
With slight modifications, it can also be used to produce explicit
members from the classes $\cQ_{g,0}$, see Section~\ref{sec:Qg0}.
Examples of such quivers for small values of $g$ and $b$ are shown in
Figure~\ref{fig:Qg0} and Figure~\ref{fig:Qgb}.

\begin{figure}
\[
\begin{array}{cc}
(1,0) &
\begin{array}{c}
\xymatrix@=0.5pc{
& {\bullet} \ar@/^0.1pc/[ddr] \ar@/_0.1pc/[ddr] \\ \\
{\bullet} \ar@/^0.1pc/[uur] \ar@/_0.1pc/[uur]
& & {\bullet} \ar@/^0.1pc/[ll] \ar@/_0.1pc/[ll]
}
\end{array}
\\ \\
(2,0) &
\begin{array}{c}
\xymatrix@=1pc{
& & {\bullet} \ar[dl] \ar[dd] & & {\bullet} \ar[dd] \ar[drr] \\
{\bullet} \ar[urr] \ar[drr] & {\bullet} \ar@/^0.1pc/[l] \ar@/_0.1pc/[l]
& & {\bullet} \ar[ul] \ar[ur]
& & {\bullet} \ar[ul] \ar[dl] & {\bullet} \ar@/^0.1pc/[l] \ar@/_0.1pc/[l] \\
& & {\bullet} \ar[ul] \ar[ur] & & {\bullet} \ar[ul] \ar[urr]
}
\end{array}
\\ \\
(3,0) &
\begin{array}{c}
\xymatrix@=1pc{
& & {\bullet} \ar[dl] \ar[dd]
& & {\bullet} \ar[rr] \ar[dd] & & {\bullet} \ar[dl] \ar[dd]
& & {\bullet} \ar[dd] \ar[drr] \\
{\bullet} \ar[urr] \ar[drr] & {\bullet} \ar@/^0.1pc/[l] \ar@/_0.1pc/[l]
& & {\bullet} \ar[ul] \ar[ur]
& & {\bullet} \ar[dl] \ar[ul]
& & {\bullet} \ar[ul] \ar[ur] & & {\bullet} \ar[ul] \ar[dl]
& {\bullet} \ar@/^0.1pc/[l] \ar@/_0.1pc/[l] \\
& & {\bullet} \ar[ul] \ar[ur]
& & {\bullet} \ar[ul] \ar[rr] & & {\bullet} \ar[ul] \ar[ur]
& & {\bullet} \ar[ul] \ar[urr]
}
\end{array}
\\ \\
(4,0) &
\begin{array}{c}
\xymatrix@=1pc{
& & {\bullet} \ar[dl] \ar[dd]
& & {\bullet} \ar[rr] \ar[dd] & & {\bullet} \ar[dl] \ar[dd]
& & {\bullet} \ar[rr] \ar[dd] & & {\bullet} \ar[dl] \ar[dd]
& & {\bullet} \ar[dd] \ar[drr] \\
{\bullet} \ar[urr] \ar[drr] & {\bullet} \ar@/^0.1pc/[l] \ar@/_0.1pc/[l]
& & {\bullet} \ar[ul] \ar[ur] & & {\bullet} \ar[ul] \ar[dl]
& & {\bullet} \ar[ul] \ar[ur]
& & {\bullet} \ar[ul] \ar[dl] & & {\bullet} \ar[ul] \ar[ur]
& & {\bullet} \ar[ul] \ar[dl] & {\bullet} \ar@/^0.1pc/[l] \ar@/_0.1pc/[l] \\
& & {\bullet} \ar[ul] \ar[ur]
& & {\bullet} \ar[ul] \ar[rr] & & {\bullet} \ar[ul] \ar[ur]
& & {\bullet} \ar[ul] \ar[rr] & & {\bullet} \ar[ul] \ar[ur]
& & {\bullet} \ar[ul] \ar[urr]
}
\end{array}
\end{array}
\]
\caption{Representative quivers in $\cQ_{g,0}$ for $g=1,2,3,4$.}
\label{fig:Qg0}
\end{figure}
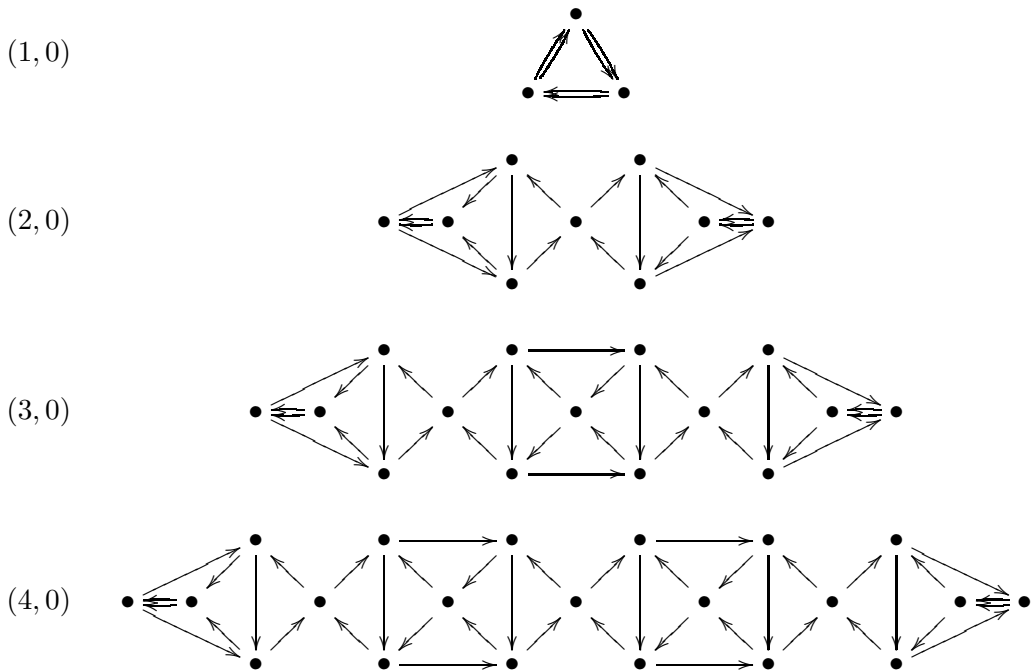

\begin{figure}
\[
\begin{array}{ccc}
\begin{array}{c}
\xymatrix@=0.3pc{
{\bullet} \ar@/_1pc/[dddd] \ar@/^1pc/[dddd] \\ \\ \\ \\
{\bullet}
}
\end{array}
&
\begin{array}{c}
\xymatrix@=0.3pc{
{\bullet} \ar@/_1pc/[dddd] \ar[drr]
&& && {\bullet} \ar[llll] \ar@/^1pc/[dddd] \\
&& {\bullet} \ar[dd] \ar[urr] \\ \\
&& {\bullet} \ar[dll] \\
{\bullet} \ar[rrrr] && && {\bullet} \ar[ull]
}
\end{array}
&
\begin{array}{c}
\xymatrix@=0.3pc{
{\bullet} \ar@/_1pc/[dddd] \ar[drr]
&& && {\bullet} \ar[drr] \ar[llll]
&& && {\bullet} \ar[llll] \ar@/^1pc/[dddd] \\
&& {\bullet} \ar[dd] \ar[urr]
&& && {\bullet} \ar[dd] \ar[urr] \\ \\
&& {\bullet} \ar[dll]
&& && {\bullet} \ar[dll] \\
{\bullet} \ar[rrrr]
&& && {\bullet} \ar[ull] \ar[rrrr]
&& && {\bullet} \ar[ull]
}
\end{array}
\\
(0,2) & (0,3) & (0,4)
\\ \\
\begin{array}{c}
\xymatrix@=0.3pc{
&&& {\bullet} \ar[ddl] \ar@/^1pc/[dddd] \\ \\
{\bullet} \ar[uurrr] \ar[ddrrr]
&& {\bullet} \ar@/_0.1pc/[ll] \ar@/^0.1pc/[ll] \\ \\
&&& {\bullet} \ar[uul]
}
\end{array}
&
\begin{array}{c}
\xymatrix@=0.3pc{
&&& {\bullet} \ar[ddl] \ar[drr]
&& && {\bullet} \ar[llll] \ar@/^1pc/[dddd] \\
&&& && {\bullet} \ar[dd] \ar[urr] \\
{\bullet} \ar[uurrr] \ar[ddrrr]
&& {\bullet} \ar@/_0.1pc/[ll] \ar@/^0.1pc/[ll] \\
&&& && {\bullet} \ar[dll] \\
&&& {\bullet} \ar[uul] \ar[rrrr]
&& && {\bullet} \ar[ull]
}
\end{array}
&
\begin{array}{c}
\xymatrix@=0.3pc{
&&& {\bullet} \ar[ddl] \ar[drr]
&& && {\bullet} \ar[llll] \ar[drr]
&& && {\bullet} \ar[llll] \ar@/^1pc/[dddd] \\
&&& && {\bullet} \ar[dd] \ar[urr]
&& && {\bullet} \ar[dd] \ar[urr] \\
{\bullet} \ar[uurrr] \ar[ddrrr]
&& {\bullet} \ar@/_0.1pc/[ll] \ar@/^0.1pc/[ll] \\
&&& && {\bullet} \ar[dll] && && {\bullet} \ar[dll] \\
&&& {\bullet} \ar[uul] \ar[rrrr]
&& && {\bullet} \ar[ull] \ar[rrrr]
&& && {\bullet} \ar[ull]
}
\end{array}
\\
(1,1) & (1,2) & (1,3)
\\ \\
\begin{array}{c}
\xymatrix@=0.79pc{
&& {\bullet} \ar[dl] \ar[dd] \\
{\bullet} \ar[urr] \ar[drr]
& {\bullet} \ar@/_0.1pc/[l] \ar@/^0.1pc/[l]
& & {\bullet} \ar[ul] \ar@/^1pc/[ddd] \\
&& {\bullet} \ar[ur] \ar[ul] \\
&& {\bullet} \ar[dl] \ar[dd] \\
{\bullet} \ar[urr] \ar[drr]
& {\bullet} \ar@/_0.1pc/[l] \ar@/^0.1pc/[l]
& & {\bullet} \ar[ul] \\
&& {\bullet} \ar[ur] \ar[ul]
}
\end{array}
&
\begin{array}{c}
\xymatrix@=0.79pc{
&& {\bullet} \ar[dl] \ar[dd] \\
{\bullet} \ar[urr] \ar[drr]
& {\bullet} \ar@/_0.1pc/[l] \ar@/^0.1pc/[l]
& & {\bullet} \ar[ul] \ar[dr]
&& {\bullet} \ar[ll] \ar@/^1pc/[ddd] \\
&& {\bullet} \ar[ur] \ar[ul] && {\bullet} \ar[ur] \ar[d] \\
&& {\bullet} \ar[dl] \ar[dd] && {\bullet} \ar[dl] \\
{\bullet} \ar[urr] \ar[drr]
& {\bullet} \ar@/_0.1pc/[l] \ar@/^0.1pc/[l]
& & {\bullet} \ar[ul] \ar[rr] && {\bullet} \ar[ul] \\
&& {\bullet} \ar[ur] \ar[ul]
}
\end{array}
&
\begin{array}{c}
\xymatrix@=0.79pc{
&& {\bullet} \ar[dl] \ar[dd] \\
{\bullet} \ar[urr] \ar[drr]
& {\bullet} \ar@/_0.1pc/[l] \ar@/^0.1pc/[l]
& & {\bullet} \ar[ul] \ar[dr]
&& {\bullet} \ar[ll] \ar[dr] && {\bullet} \ar[ll] \ar@/^1pc/[ddd] \\
&& {\bullet} \ar[ur] \ar[ul] && {\bullet} \ar[ur] \ar[d]
&& {\bullet} \ar[ur] \ar[d] \\
&& {\bullet} \ar[dl] \ar[dd] && {\bullet} \ar[dl]
&& {\bullet} \ar[dl] \\
{\bullet} \ar[urr] \ar[drr]
& {\bullet} \ar@/_0.1pc/[l] \ar@/^0.1pc/[l]
& & {\bullet} \ar[ul] \ar[rr] && {\bullet} \ar[ul] \ar[rr]
&& {\bullet} \ar[ul] \\
&& {\bullet} \ar[ur] \ar[ul]
}
\end{array}
\\
(2,1) & (2,2) & (2,3)
\end{array}
\]
\caption{Representative quivers in each $\cQ_{g,b}$ for small values of
$(g,b)$, $b>0$.}
\label{fig:Qgb}
\end{figure}
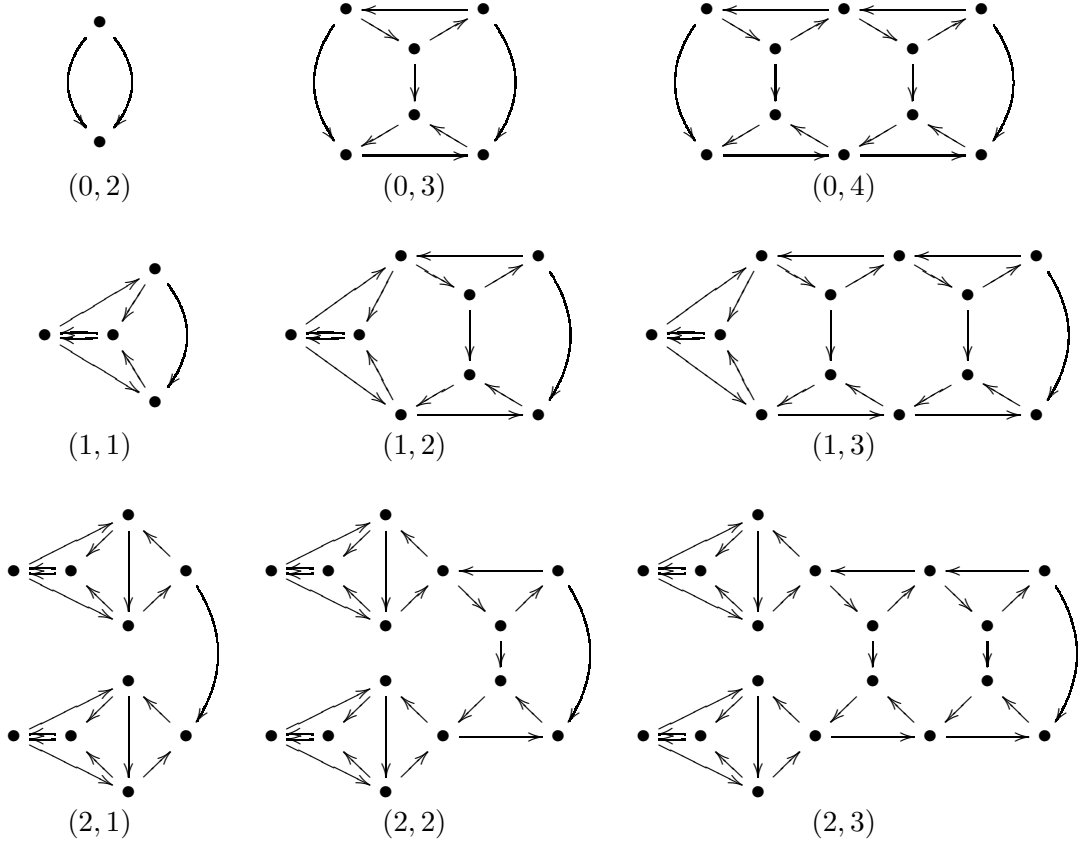

The next theorem provides a complete answer to our question.
\begin{theorem} \label{t:const}
For a connected quiver $Q$, the following statements are equivalent.
\begin{enumerate}
\renewcommand{\theenumi}{\roman{enumi}}
\item \label{it:mut:same}
All the quivers in the mutation class of $Q$ have the same number of
arrows.

\item \label{it:mut:Qgb}
$Q$ has at most two vertices, or $Q \in \cQ_{g,b}$ for some $g,b \geq 0$
such that $(g,b) \not \in \{(0,0), (0,1)\}$.
\end{enumerate}

Moreover, any quiver in $\cQ_{g,b}$ for $b>0$ has $6(g-1)+4b$ vertices
and $12(g-1)+7b$ arrows, whereas any quiver in $\cQ_{g,0}$ has $6g-3$
vertices, $12g-6$ arrows and any of its vertices has exactly two
incoming and two outgoing arrows.
\end{theorem}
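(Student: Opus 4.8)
\section*{Proof proposal}

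The plan is to deduce the equivalence from the classification of mutation-finite quivers, and to read off the numerical data from the Euler characteristic of the underlying surface. The guiding observation is that constancy of the arrow number is an extremely strong restriction: if every quiver in the mutation class of $Q$ has the same number of arrows $a$, then the whole class consists of quivers on the fixed vertex set of $Q$ with exactly $a$ arrows, and there are only finitely many such quivers. Hence the mutation class of $Q$ is finite, that is, $Q$ is mutation-finite. This reduces the implication \eqref{it:mut:same} $\Rightarrow$ \eqref{it:mut:Qgb} to inspecting the known list of mutation-finite quivers.

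For \eqref{it:mut:same} $\Rightarrow$ \eqref{it:mut:Qgb} I would invoke the classification of mutation-finite quivers of Felikson, Shapiro and Tumarkin: a connected mutation-finite quiver either has at most two vertices, or arises from a triangulation of a marked bordered surface, or belongs to a finite list of exceptional types. The exceptional quivers are eliminated by a direct (finite) computation of their mutation classes, checking that the arrow number is non-constant in each case. It then remains to decide, among the surface quivers, exactly which surfaces $(S,M)$ yield constant arrow number. Here I would translate mutations into flips of triangulations via~\cite{FST08} and show that constancy forces $(S,M)$ to carry at most one marked point on each boundary component and at most one puncture, with no puncture allowed once the boundary is non-empty; equivalently, $(S,M)$ is one of the surfaces defining $\cQ_{g,b}$. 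Conversely these same surfaces give constant arrow number, which together with the trivial two-vertex case (where the quiver is always a source or a sink, so mutation merely reverses its arrows) settles \eqref{it:mut:Qgb} $\Rightarrow$ \eqref{it:mut:same}.

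For the numerical assertions I would compute on a single representative triangulation and then propagate by constancy. Writing $g, b, p, c$ for the genus, the number of boundary components, the number of punctures and the number of marked points on the boundary, the number of arcs in any ideal triangulation is $n = 6g - 6 + 3b + 3p + c$ by~\cite{FST08}, and counting the marked points, edges and triangles of the induced cell decomposition through $V - E + F = 2 - 2g - b$ gives the number of triangles $t = 4g - 4 + 2b + 2p + c$. Substituting $(p,c) = (1,0)$ for $\cQ_{g,0}$ yields $n = 6g - 3$ and $t = 4g - 2$, while $(p,c) = (0,b)$ for $\cQ_{g,b}$ yields $n = 6(g-1) + 4b$ and $t = 4g - 4 + 3b$. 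Since the arrows are the pairs of arcs bounding a common triangle, I would classify triangles by their number of interior sides and use that each interior arc lies on two triangles and each boundary segment on one. For $\cQ_{g,0}$ there are no boundary segments, every triangle is a $3$-cycle, each vertex receives exactly one incoming and one outgoing arrow from each of its two triangles, giving the two-in two-out property and arrow number $3t = 2n = 12g - 6$. For $\cQ_{g,b}$ every boundary segment bounds a triangle with two interior sides and none with two boundary sides, so the arrow number is $2n - b = 12(g-1) + 7b$.

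The main obstacle I anticipate is the surface part of \eqref{it:mut:same} $\Rightarrow$ \eqref{it:mut:Qgb}: excluding every surface other than those defining $\cQ_{g,b}$. This requires exhibiting, in each excluded surface, a flip that changes the arrow number. The prototype is the triangulated hexagon, where flipping a diagonal converts a path of type $A_3$ into a $3$-cycle and raises the arrow number from $2$ to $3$; the task is to embed such an arrow-changing local move into any triangulation of a surface having a boundary component with at least two marked points, or at least two punctures, or a puncture together with non-empty boundary. A secondary technical difficulty is the presence of self-folded triangles and the attendant need for tagged triangulations in the punctured case; I would circumvent it by performing the numerical computation on representatives without self-folded triangles, such as those in Figure~\ref{fig:Qg0} and Figure~\ref{fig:Qgb}, and by appealing to the already-established constancy to transfer the count to the entire mutation class.
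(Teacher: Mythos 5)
Your overall skeleton (mutation-finiteness deduced from constancy, the Felikson--Shapiro--Tumarkin classification, elimination of the exceptional types by finite computation, and the Euler-characteristic count) is sound, and the numerical formulas you derive are correct; indeed, the global counting route is essentially how the case $b>0$ was handled in \cite{Ladkani11}, which this paper cites. But as written your proposal has a genuine gap: you never prove (\ref{it:mut:Qgb}) $\Rightarrow$ (\ref{it:mut:same}). In your second paragraph the constancy for $\cQ_{g,b}$ is simply asserted (``conversely these same surfaces give constant arrow number''), and in your third paragraph you compute ``on a single representative triangulation and then propagate by constancy'' --- which is circular, since constancy is exactly what is to be proved. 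Your count \emph{could} become a proof if carried out for an \emph{arbitrary} (tagged) triangulation, because then constancy would follow from the fact \cite{FST08} that the mutation class consists precisely of the quivers of all tagged triangulations of the surface; but that requires exactly the two ingredients you defer: ruling out self-folded triangles and, crucially, ruling out cancelling $2$-cycles. Two triangles sharing two arcs may a priori contribute arrows in opposite directions which are then deleted (this genuinely happens, e.g.\ in the once-punctured digon, whose quiver has no arrows at all), and then ``arrows $=$ pairs of arcs bounding a common triangle'' overcounts. Ruling this out is the substance of the paper's local analysis (Proposition~\ref{p:in1out1}, Table~\ref{t:nosegments}, and the proposition on $\cQ_{g,0}$ in Section~\ref{sec:Qg0}). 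Note also that the final claim of Theorem~\ref{t:const}, that \emph{every} vertex of \emph{every} quiver in $\cQ_{g,0}$ has in-degree and out-degree $2$, is a per-vertex property that cannot be ``propagated by constancy'' of the total arrow number; your plan gives no proof of it.

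The second gap is the surface half of (\ref{it:mut:same}) $\Rightarrow$ (\ref{it:mut:Qgb}), which you rightly call the main obstacle but then leave unresolved beyond the hexagon prototype. The paper's mechanism is uniform and worth internalizing: Lemma~\ref{l:in1out1} shows that mutation at any vertex with in-degree $1$ and out-degree $1$ changes the number of arrows by exactly one; Lemmas~\ref{l:addp} and~\ref{l:addb} then show that adding a puncture (placed on an arc of an existing triangulation) or a boundary marked point (placed on a boundary segment of a triangle satisfying condition ($\spadesuit$)) always yields a triangulation containing such a vertex, so every non-minimal surface violates (\ref{it:mut:same}). Finally, your description of the surfaces forced by constancy is slightly wrong: the once-punctured disc with one or two boundary marked points has a puncture \emph{and} non-empty boundary, yet its arrow number is constant; these small cases give the quivers $A_1$ and $A_2$ and are absorbed into the ``at most two vertices'' alternative of (\ref{it:mut:Qgb}), so your dichotomy must be stated and checked with them included.
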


From the theorem we see that mutation classes consisting of connected
quivers with constant number of arrows are quite rare. In fact, for any
$n \geq 3$ the number of such classes with $n$ vertices is finite. We
also deduce the following.

\begin{cor*}
Let $n>1$. There exists a mutation class consisting of connected
quivers with $n$ vertices and constant number of arrows if and only if
$n \not \equiv 1,5 \pmod 6$.
\end{cor*}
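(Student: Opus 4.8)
The plan is to read the corollary off directly from the numerical data supplied by Theorem~\ref{t:const}, reducing it to an elementary Diophantine bookkeeping problem. By that theorem, a mutation class of connected quivers on $n$ vertices has constant number of arrows precisely when it consists of quivers with at most two vertices (so $n \le 2$), or when it equals some $\cQ_{g,0}$ with $g \ge 1$ (contributing $n = 6g-3$ vertices), or some $\cQ_{g,b}$ with $b>0$ and $(g,b)\neq(0,0)$ (contributing $n = 6(g-1)+4b$ vertices). The question thus becomes: which integers $n>1$ arise as one of these values? I would organize the proof into the two implications, first establishing which residues modulo $6$ are \emph{forced} and then exhibiting explicit pairs realizing every admissible value.

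For the necessity direction (that $n \equiv 1,5 \pmod 6$ cannot occur), I would simply compute residues. The two-vertex case gives $n=2\equiv 2 \pmod 6$. A class $\cQ_{g,0}$ gives $n = 6g-3 \equiv 3 \pmod 6$, always odd. For $\cQ_{g,b}$ with $b>0$ the vertex count $6(g-1)+4b$ is a sum of two even numbers, hence even; reducing $4b$ modulo $6$ yields $4b \equiv 0,4,2 \pmod 6$ according as $b\equiv 0,1,2\pmod 3$, so $n \equiv 0,2,4 \pmod 6$. Consequently every attainable $n>1$ lies in $\{0,2,3,4\}\pmod 6$, and in particular the residues $1$ and $5$ never occur. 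This gives one direction.

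For sufficiency I would exhibit, for each $n>1$ with $n\equiv 0,2,3,4 \pmod 6$, an admissible pair realizing it. Any odd admissible $n$ is necessarily $\equiv 3 \pmod 6$: writing $n = 6(k+1)-3$ with $k\ge 0$, the class $\cQ_{k+1,0}$ realizes it. For even $n$ I would split by residue and take $(g,b)=(k-1,3)$ when $n=6k$, $(g,b)=(k,2)$ when $n=6k+2$, and $(g,b)=(k+1,1)$ when $n=6k+4$. In each case a one-line substitution confirms $6(g-1)+4b = n$, while $g\ge 0$, $b\ge 1$, and $(g,b)\neq(0,1)$ hold by inspection.

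The only point requiring genuine care, and hence the closest thing to an obstacle, is checking that these explicit constructions remain admissible at the smallest values of $n$: one must verify that the excluded pair $(0,1)$ is never forced and that the boundary value $n=2$ is genuinely covered (for instance by $\cQ_{0,2}$, corresponding to the double arrow on two vertices shown in Figure~\ref{fig:Qgb}). Once these small cases are confirmed, the two implications combine to yield the stated equivalence $n\not\equiv 1,5\pmod 6$.
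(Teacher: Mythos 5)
Your proposal is correct and follows exactly the route the paper intends: the corollary is stated as a direct deduction from the vertex counts in Theorem~\ref{t:const} ($n\le 2$, $n=6g-3$ for $\cQ_{g,0}$, and $n=6(g-1)+4b$ for $\cQ_{g,b}$ with $b>0$), and your residue computation plus the explicit realizations $(k+1,0)$, $(k-1,3)$, $(k,2)$, $(k+1,1)$ is precisely the omitted arithmetic. The only nitpick is that in your setup you wrote the exclusion as $(g,b)\neq(0,0)$ where the theorem excludes $(0,1)$ for $b>0$, but since you verify $(g,b)\neq(0,1)$ in the sufficiency step this has no effect on the argument.
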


\subsection{Outline of the proof}

A mutation class whose quivers have the same number of arrows must
consist of a finite number of quivers. The proof of the implication
(\ref{it:mut:same}) $\Rightarrow$ (\ref{it:mut:Qgb}) in
Theorem~\ref{t:const} relies on the classification by Felikson, Shapiro
and Tumarkin~\cite{FST08b} of the connected quivers whose mutation
classes are finite; either they arise from triangulations of marked
bordered oriented surfaces, or they are mutation equivalent to one of
$11$ exceptional quivers, or they are acyclic with two vertices and at
least three arrows between them.

Obviously, mutations of quivers with one or two vertices are just
reflections preserving the number of arrows. Next, one checks that each
of the $11$ exceptional mutation classes contains two quivers with
different numbers of arrows, see Section~\ref{sec:except}.

We are left to consider quivers arising from triangulations of marked
bordered surfaces. We show in Section~\ref{sec:addpoint} that for such
marked surface $(S,M')$ admitting a triangulation, adding a puncture
or, under mild condition, a marked point on its boundary, results in a
marked surface $(S,M)$ which has two triangulations giving rise to two
quivers in the corresponding mutation class with different numbers of
arrows. Therefore we need only to consider ``minimal'' marked bordered
surfaces, and those that remain are precisely the ones giving rise to
the mutation classes $\cQ_{g,b}$ defined above.

To prove the implication (\ref{it:mut:Qgb}) $\Rightarrow$
(\ref{it:mut:same}) in Theorem~\ref{t:const} for these classes, we show
in Section~\ref{sec:Qgb} that mutating any of their quivers at any
vertex does not change the number of arrows. Note that for the classes
$\cQ_{g,b}$ with $b>0$ the statement of the theorem has already been
shown in~\cite[\S 2]{Ladkani11} by a counting argument yielding
formulae for the numbers of arrows and $3$-cycles in any quiver in
$\cQ_{g,b}$ which depend only on $g$ and $b$.

In this paper we take a more general approach and characterize, for the
family of quivers arising from triangulations of marked bordered
surfaces without punctures, those mutations that preserve the number of
arrows. This is done by analyzing the possible local ``neighborhoods''
at any vertex. The result for $\cQ_{g,b}$ with $b>0$ then follows as a
special case, see Section~\ref{sec:unpunctured}.

Similar analysis for the remaining classes $\cQ_{g,0}$ is done
in Section~\ref{sec:Qg0}, where we also explore their relations with
mutation classes of quivers of Dynkin type $A$.

\subsection{Algebraic interpretation -- derived equivalence}

By using the theory of quivers with potentials (QP) and their mutations
developed by Derksen, Weyman and Zelevinsky~\cite{DWZ08} it is
possible, in certain cases, to interpret mutations of QP preserving the
number of arrows as derived equivalences of the corresponding Jacobian
algebras. In particular, to any quiver in the mutation classes
$\cQ_{g,b}$ with $(g,b) \neq (0,0),(0,1)$ there is a naturally
associated potential allowing each class $\cQ_{g,b}$ to be regarded as
a mutation classes of QP whose Jacobian algebras are all derived
equivalent. This is elaborated in Section~\ref{sec:dereq}.

\section{Mutation classes with varying number of arrows}
\label{sec:varying}

In this section we prove the implication (\ref{it:mut:same})
$\Rightarrow$ (\ref{it:mut:Qgb}) in Theorem~\ref{t:const}. We start
with the following useful lemma.

\begin{lemma} \label{l:in1out1}
Let $Q$ be a quiver and let $k$ be a vertex in $Q$ with exactly one
incoming arrow and exactly one outgoing arrow. Then the numbers of
arrows in $Q$ and in $\mu_k(Q)$ differ by one.
\end{lemma}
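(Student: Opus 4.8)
The plan is to track, step by step, the effect of the mutation procedure on the arrow count, exploiting the fact that $k$ has a single incoming arrow $\alpha \colon i \to k$ and a single outgoing arrow $\beta \colon k \to j$. First I would observe that $i \neq j$: were $i = j$, the arrows $\alpha$ and $\beta$ would already constitute a $2$-cycle between $i$ and $k$ in $Q$, contradicting the standing assumption that $Q$ has no $2$-cycles.

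Next I would analyze the three mutation steps separately. In step~(i), the only pair of the form $(i' \to k,\ k \to j')$ is $(\alpha,\beta)$, so exactly one new arrow $i \to j$ is added. Step~(ii) reverses $\alpha$ and $\beta$ into $k \to i$ and $j \to k$, leaving the total number of arrows unchanged; moreover, since after this step $k$ carries only these two arrows and $i \neq j$, the vertex $k$ lies in no $2$-cycle, so step~(iii) performs no cancellation involving $k$. Consequently every change in the arrow count is confined to the arrows between $i$ and $j$.

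I would then conclude by a short case analysis on the arrows between $i$ and $j$ in $Q$. Because $Q$ has no $2$-cycles, all such arrows point the same way; say there are $a$ arrows $i \to j$ and $b$ arrows $j \to i$ with $\min(a,b) = 0$. After step~(i) there are $a+1$ arrows $i \to j$ and $b$ arrows $j \to i$, and step~(iii) cancels $\min(a+1,b)$ opposing pairs. If $b = 0$ no cancellation occurs and the count increases by one; if $b \geq 1$ then $a = 0$, exactly one pair cancels, and the count decreases by one. In either case the numbers of arrows in $Q$ and $\mu_k(Q)$ differ by exactly one.

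The one point requiring care is the bookkeeping in step~(iii): one must verify that the freshly added arrow cancels with at most one existing opposing arrow and that no other $2$-cycles — in particular none stemming from the reversal at $k$ — enter the count. The observation $i \neq j$, together with the uniqueness of $\alpha$ and $\beta$, is precisely what rules out such stray cancellations, so no real difficulty arises beyond this localization of the change to the $i$–$j$ edge.
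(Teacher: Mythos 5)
Your proof is correct and takes essentially the same approach as the paper's: localize the effect of mutation at $k$ to the arrows between $i$ and $j$, then observe that the count goes up by one when the existing arrows run $i \to j$ (or there are none) and down by one when they run $j \to i$. Your explicit checks that $i \neq j$ and that step~(iii) causes no stray cancellation at $k$ itself are details the paper leaves implicit, but they do not change the argument.
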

\begin{proof}
Denote by $i \xrightarrow{\alpha} k$ the incoming arrow and by $k
\xrightarrow{\beta} j$ the outgoing arrow. Apart from inverting the
arrows $\alpha$ and $\beta$, the quiver mutation at $k$ modifies only
the arrows between $i$ and $j$.

If $Q$ has $a \geq 0$ arrows from $i$ to $j$, its mutation $\mu_k(Q)$
has $a+1$ arrows from $i$ to $j$, whereas if $Q$ has $a \geq 1$ arrows
from $j$ to $i$, its mutation $\mu_k(Q)$ has $a-1$ arrows from $j$ to
$i$. In any case, the number of arrows changes by one.
\end{proof}

\subsection{The exceptional quivers}
\label{sec:except}

\begin{lemma}
The mutation class of each of the $11$ exceptional quivers contains two
quivers with different numbers of arrows.
\end{lemma}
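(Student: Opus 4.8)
The plan is to reduce everything to Lemma~\ref{l:in1out1}. That lemma guarantees a change in the number of arrows whenever we mutate at a vertex carrying exactly one incoming and exactly one outgoing arrow, so it suffices to exhibit, in each of the eleven exceptional mutation classes, a single quiver possessing such a vertex. Since every one of these classes is finite, the whole statement is a finite verification; the point is to arrange it so that the required vertices can be pointed out with as little computation as possible.

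First I would dispose of the six classes of Dynkin type $E_6, E_7, E_8$ and affine type $\tilde{E}_6, \tilde{E}_7, \tilde{E}_8$ uniformly. The underlying graph of each of these is a tree, and a glance at the six diagrams shows that every one of them contains a vertex of degree two. Given such a diagram, root it at a leaf and orient every edge away from the root: then each non-root vertex receives exactly one incoming arrow, so any degree-two vertex other than the root and the leaves has exactly one incoming and one outgoing arrow. This produces a representative to which Lemma~\ref{l:in1out1} applies, settling all six cases at once.

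Next I would turn to the genuinely non-acyclic classes, namely the elliptic types $E_6^{(1,1)}, E_7^{(1,1)}, E_8^{(1,1)}$ and the Derksen--Owen quiver $X_6$. Here a degree-two vertex need not be available, so I would inspect a standard representative of each class and, if it already contains a vertex with one incoming and one outgoing arrow, invoke Lemma~\ref{l:in1out1} directly; otherwise I would record a short, explicit sequence of mutations reaching a member of the class in which such a vertex occurs. Because the classes are finite and of modest size, each such sequence can be written down and checked by hand.

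The main obstacle is the class of $X_7$, whose mutation class is the smallest of all, consisting of only two quivers. With so few members there is essentially no room to manoeuvre toward a one-in-one-out vertex, and Lemma~\ref{l:in1out1} may fail to apply. For this class I would abandon the reduction altogether, write down both quivers explicitly, and compare their numbers of arrows directly; the assertion to be verified is simply that the two counts differ. This one hand computation, together with the uniform argument for the six tree types and the short explicit checks for the elliptic types and $X_6$, completes the proof.
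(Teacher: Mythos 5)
Your overall strategy is close to the paper's: reduce to Lemma~\ref{l:in1out1} wherever possible, and fall back on explicit inspection where it is not. Your treatment of the six tree types is fine, granting the standard fact that any two orientations of a tree are connected by reflections at sinks and sources, so the orientation you construct really does lie in the given mutation class. Your plan also works for the elliptic types: the standard representatives of $E_7^{(1,1)}$ and $E_8^{(1,1)}$ already contain a vertex of in-degree $1$ and out-degree $1$, and for $E_6^{(1,1)}$, whose representative has no such vertex (no single mutation changes its number of arrows), one preliminary mutation produces one --- this is exactly the paper's two-step mutation, first at the vertex $1$ and then at $2$. Your direct comparison for $X_7$ likewise matches the paper.

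The genuine gap is $X_6$. You group it with the elliptic types and commit to finding ``a member of the class in which such a vertex occurs,'' but no such member exists. By Derksen--Owen's explicit computation (cited in the paper), the mutation class of $X_6$ consists of five quivers, and their numbers of arrows all lie in $\{9,11\}$: three quivers with $9$ arrows and two with $11$. If any quiver in this class had a vertex with in-degree $1$ and out-degree $1$, then by Lemma~\ref{l:in1out1} the class would contain two quivers whose arrow counts differ by exactly $1$, which is impossible when every count is $9$ or $11$. So the ``otherwise'' branch of your plan can never be carried out for $X_6$; the obstacle you correctly anticipate for $X_7$ is just as much an obstacle here. The repair is the one you already use for $X_7$, and the one the paper uses for both: write down the explicit five-element mutation class and observe directly that it contains quivers with $9$ and with $11$ arrows. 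With that single correction your proof is complete and essentially coincides with the paper's.
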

\begin{proof}
In principle, since each mutation class is finite, the claim can be
verified on a computer. For the convenience of the reader, we give a
direct proof.

Since each of the $8$ quivers $E_6$, $E_7$, $E_8$, $\tilde{E}_6$,
$\tilde{E}_7$, $\tilde{E}_8$, $E^{(1,1)}_7$ and $E^{(1,1)}_8$ contains
a vertex with exactly one incoming arrow and exactly one outgoing
arrow, by Lemma~\ref{l:in1out1} we can mutate at this vertex and get a
quiver with a different number of arrows.

The quiver $E^{(1,1)}_6$ is shown in the left picture below. Note that
performing any single mutation on this quiver does not change the
number of arrows. However, when mutating at the vertex $1$ and then at
$2$, we get a quiver with one arrow less, as shown in the right
picture.
\begin{align*}
\xymatrix@=0.5pc{
&& && && {\bullet_5}
\ar[dddll] \ar[rr] && {\bullet_6} \\
&& && {\bullet_4} \ar[urr] \ar[dddrr] \ar[dll] \\
{\bullet_2} && {\bullet_1} \ar[ll] \ar[drr] \\
&& && {\bullet_3} \ar@/^0.2pc/[uu] \ar@/_0.2pc/[uu] \\
&& && && {\bullet_7} \ar[ull] \ar[rr] && {\bullet_8}
}
& &
\xymatrix@=0.5pc{
&& && && {\bullet_5}
\ar[dddll] \ar[rr] && {\bullet_6} \\
&& && {\bullet_4} \ar[urr] \ar[dddrr] \\
{\bullet_2} \ar[urrrr] && {\bullet_1} \ar[ll] \\
&& && {\bullet_3} \ar[uu] \ar[ull] \\
&& && && {\bullet_7} \ar[ull] \ar[rr] && {\bullet_8}
}
\end{align*}

It remains to consider the quivers $X_6$ and $X_7$ introduced by
Derksen and Owen~\cite{DerksenOwen08} who also wrote down explicitly
their mutation classes which consist of $5$ and $2$ quivers,
respectively. We see that the mutation class of $X_6$ contains quivers
with $9$ and $11$ arrows, whereas that of $X_7$ contains ones with $12$
and $15$ arrows.
\end{proof}

\subsection{Quivers from marked surfaces}
\label{sec:addpoint}

Consider now quivers arising from triangulations of bordered oriented
surfaces with marked points. We refer the reader to~\cite{FST08} for
the relevant definitions and constructions.


\begin{lemma} \label{l:addp}
Let $(S,M')$ be a marked surface which has a triangulation, and let $M$
be obtained from $M'$ by adding a puncture. Then $(S,M)$ has two
triangulations whose corresponding quivers have different numbers of
arrows.
\end{lemma}
\begin{proof}
Choose an arc $\gamma$ of a triangulation $T'$ of $(S,M')$ and denote
the marked points at its ends by $A$ and $B$ (which may coincide). We
may place the new puncture $\times$ in $M$ on $\gamma$ and obtain a
triangulation $T$ of $(S,M)$ by replacing $\gamma$ with four arcs as
depicted in the left picture below. By flipping the arc labeled $2$, we
obtain another triangulation of $(S,M)$, part of which is depicted in
the right picture.
\begin{align*}
\xymatrix{
{_A\cdot} \ar@{-}[rr]^2 \ar@{-}@/^2pc/[rrr]_3 \ar@{-}@/_2pc/[rrr]^1
& *=0{} & {\times} \ar@{-}[r]^4
& {\cdot_B}
}
& &
\xymatrix{
{_A\cdot} \ar@{-}@/^2pc/[rrr]_(0.3){3} \ar@{-}@/_2pc/[rrr]^(0.3){1}
& *=0{} & {\times} \ar@{-}[r]^4
& {\cdot_B} \ar@{-}@/_1pc/[ll]^(0.8){2} \ar@{-}@/^1pc/[ll]
}
\end{align*}

Consider the quiver $Q$ corresponding to $T$ and its mutation
$\mu_2(Q)$ corresponding to the flip of $T$ at the arc labeled $2$.
Since $Q$ has precisely one arrow ending at $2$, namely the arrow $1
\to 2$ induced from the triangle $\{1,2,4\}$ in $T$, and precisely one
arrow starting at $2$, namely $2 \to 3$ induced from the triangle
$\{2,3,4\}$, by Lemma~\ref{l:in1out1} the numbers of arrows in $Q$ and
in $\mu_2(Q)$ differ by one.
\end{proof}


\begin{lemma} \label{l:addb}
Let $(S,M')$ be a marked surface with non-empty boundary which has a
triangulation satisfying the following condition:
\begin{enumerate}
\item[($\spadesuit$)]
There is a triangle which is not self-folded such that exactly one of
its sides is a boundary segment.
\end{enumerate}

Denoting this segment by $c$, let $M$ be obtained from $M'$ by adding a
marked point on $c$. Then $(S,M)$ has two triangulations whose
corresponding quivers have different numbers of arrows. Moreover, one
of these triangulations satisfies the condition~($\spadesuit$).
\end{lemma}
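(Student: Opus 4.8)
The plan is to mimic the strategy of Lemma~\ref{l:addp} but adapted to adding a boundary marked point rather than a puncture, using the hypothesis~($\spadesuit$) to locate a suitable triangle in which to insert the new point. First I would fix a triangulation $T'$ of $(S,M')$ satisfying~($\spadesuit$), and let $t$ be the non-self-folded triangle whose sides are $c$ (the boundary segment), $\gamma_1$ and $\gamma_2$, where $\gamma_1,\gamma_2$ are arcs (possibly coinciding, but not both boundary segments since only one side is a boundary segment). I would then add the new marked point on $c$, which splits $c$ into two boundary segments, and produce a triangulation $T$ of $(S,M)$ by drawing a single new arc $e$ from the new marked point to the opposite vertex of $t$, thereby subdividing $t$ into two triangles. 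All other arcs of $T'$ are retained, so $T$ is genuinely a triangulation of $(S,M)$.

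Next I would identify the quiver $Q$ associated to $T$ and locate the vertex corresponding to the new arc $e$. The key point is that $e$ is a side of exactly two triangles in $T$, both of which are the halves of the original triangle $t$; in each such triangle, the arrows into and out of the vertex $e$ are controlled by the triangle's other non-boundary side. Since one half-triangle has a boundary side among its three sides (a piece of the subdivided $c$) and the other likewise, I would argue that $e$ has exactly one incoming and exactly one outgoing arrow in $Q$, coming from $\gamma_1$ and $\gamma_2$ respectively. Having established this, Lemma~\ref{l:in1out1} applies: flipping $e$ (i.e.\ mutating $Q$ at the vertex $e$) changes the number of arrows by one, so $T$ and its flip at $e$ give two triangulations of $(S,M)$ whose quivers differ in their number of arrows.

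It remains to verify the ``moreover'' clause, namely that one of the two triangulations continues to satisfy~($\spadesuit$), which is needed so that the lemma can be applied iteratively in the reduction to minimal surfaces. Here I would check that one of the two half-triangles of $t$, or its image after the flip, is itself a non-self-folded triangle with exactly one boundary side. The most economical route is probably to confirm that $T$ itself already satisfies~($\spadesuit$): at least one of the two new half-triangles has precisely one side on the subdivided boundary segment and is not self-folded, because a self-folded triangle would force one of $\gamma_1,\gamma_2$ to enclose a puncture, a configuration one can exclude or handle separately. Thus I would exhibit the required triangle explicitly in $T$.

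The main obstacle I anticipate is the degenerate cases hidden in the phrase ``which may coincide'' and in the interaction with self-folded triangles: if $\gamma_1$ and $\gamma_2$ are identified, or if the opposite vertex configuration produces a self-folded triangle after subdivision, then the clean ``one-in one-out'' count at $e$ and the persistence of~($\spadesuit$) need a careful case check. I would handle these by appealing to the explicit local arrow rules from~\cite{FST08} relating triangles to arrows, and by choosing the new arc $e$ so that the two resulting triangles are as generic as possible; the hypothesis that $t$ is \emph{not} self-folded is precisely what rules out the worst degeneracies, so I expect the argument to go through once the local picture around $e$ is drawn carefully.
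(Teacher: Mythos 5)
Your proposal is correct and follows essentially the same route as the paper's proof: insert the new point on $c$, connect it by a new arc $e$ to the opposite vertex of the ($\spadesuit$)-triangle, observe that the vertex $e$ has in-degree and out-degree $1$ (each half-triangle contributes exactly one arrow at $e$, since its third side is a boundary segment), apply Lemma~\ref{l:in1out1}, and note that the half-triangle formed by $\gamma_1$, $e$ and one piece of $c$ witnesses ($\spadesuit$) for the new triangulation. The degenerate cases you worry about do not actually arise: since the original triangle is not self-folded its three sides are distinct, and $e$ ends at the newly added marked point which lies on no other arc, so neither half-triangle can be self-folded and no separate case analysis is needed.
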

\begin{proof}
Let $T'$ be a triangulation of $(S,M')$ satisfying
condition~($\spadesuit$). The triangle of $T'$ from that condition
looks like
\[
\xymatrix@=1pc{
& & & {\cdot_B} \ar@{-}@/_2pc/[dd]^{c} \\
{_E\cdot} \ar@{-}@/_/[drrr] \ar@{-}@/^/[urrr] \\
& & & {\cdot_A}
}
\]
where $A$ and $B$ denote the endpoints of the boundary segment $c$ and
$E$ is the other vertex of the triangle (note that all these marked
points might coincide).

Let $M = M' \cup \{Z\}$ where $Z$ is a point on the segment $c$,
splitting it into two segments $c'$ and $c''$. We obtain from $T'$ a
triangulation $T$ of $(S,M)$ by adding the arc connecting $E$ and $Z$
as in the left picture below. The triangle consisting of the arcs
labeled $1$, $2$ and the segment $c'$ shows that $T$ satisfies
condition~($\spadesuit$). By flipping the arc labeled $2$, we obtain
another triangulation of $(S,M)$, part of which is depicted in the
right picture.
\begin{align*}
\xymatrix@=1pc{
& & & {\cdot_B} \ar@{-}@/_/[dl]^{c''} \\
{_E\cdot} \ar@{-}@/_/[drrr]_1 \ar@{-}@/^/[urrr]^3 \ar@{-}[rr]^2 & &
{\cdot_Z} \ar@{-}@/_/[dr]^{c'} \\
& & & {\cdot_A}
}
& &
\xymatrix@=1pc{
& & & {\cdot_B} \ar@{-}@/_/[dl]^{c''} \\
{_E\cdot} \ar@{-}@/_/[drrr]_1 \ar@{-}@/^/[urrr]^3 &
*=0{} \ar@{-}@/^/[urr]_(0.2){2} \ar@{-}@/_/[drr] &
{\cdot_Z} \ar@{-}@/_/[dr]^{c'} \\
& & & {\cdot_A}
}
\end{align*}

Consider the quiver $Q$ corresponding to $T$ and its mutation
$\mu_2(Q)$ corresponding to the flip of $T$ at the arc labeled $2$.
Since $Q$ has precisely one arrow ending at $2$, namely the arrow $1
\to 2$ induced from the triangle $\{1,2,c'\}$ in $T$, and precisely one
arrow starting at $2$, namely $2 \to 3$ induced from the triangle
$\{2,3,c''\}$, by Lemma~\ref{l:in1out1} the numbers of arrows in $Q$
and in $\mu_2(Q)$ differ by one.
\end{proof}

\begin{lemma}
Let $(S,M)$ be a marked surface without boundary whose triangulations
give rise to a mutation class consisting of quivers with the same
number of arrows. Then $(S,M)$ must be one of the following:
\begin{enumerate}
\renewcommand{\theenumi}{\alph{enumi}}
\item
A sphere with $3$ punctures. The corresponding quiver is then a
disjoint union of three vertices;

\item
A (closed) surface of genus $g>0$ with one puncture. The corresponding
mutation class is $\cQ_{g,0}$.
\end{enumerate}
\end{lemma}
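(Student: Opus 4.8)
The plan is to reduce to \emph{removing} punctures and then to invoke Lemma~\ref{l:addp}. Since $S$ has empty boundary, every point of $M$ is a puncture; write $p = |M| \geq 1$ and let $g$ be the genus of $S$. A short Euler-characteristic count (an ideal triangulation has $V = p$ vertices, $F$ triangles and $E$ arcs with $3F = 2E$ and $V - E + F = 2 - 2g$) shows that every triangulation of $(S,M)$ has exactly $n = 6g - 6 + 3p$ arcs. Consequently $(S,M)$ admits a triangulation precisely when $n \geq 1$, and among closed marked surfaces with $p \geq 1$ the only non-triangulable ones are the once- and twice-punctured spheres ($g = 0$, $p \in \{1,2\}$, where $n = -3$ and $n = 0$ respectively).

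First I would treat the case $p \geq 2$. Fix a puncture $x$ and set $M' = M \setminus \{x\}$, so that $(S,M')$ is a closed surface of genus $g$ with $p - 1 \geq 1$ punctures and $(S,M)$ is obtained from $(S,M')$ by adjoining the puncture $x$. If $(S,M')$ admitted a triangulation, then Lemma~\ref{l:addp} would exhibit two triangulations of $(S,M)$ whose quivers have different numbers of arrows, contradicting the hypothesis. Hence $(S,M')$ is \emph{not} triangulable, so by the first paragraph it is a once- or twice-punctured sphere. If it is twice-punctured then $p = 3$ and $g = 0$, and $(S,M)$ is the thrice-punctured sphere, i.e.\ case~(a). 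If it is once-punctured then $p = 2$ and $g = 0$, forcing $(S,M)$ to be the twice-punctured sphere; but that surface is not triangulable and therefore cannot be our $(S,M)$, so this subcase is vacuous. Thus $p \geq 2$ yields exactly case~(a).

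It remains to record the case $p = 1$, where $(S,M)$ is a closed genus-$g$ surface with a single puncture. Since $(S,M)$ is assumed to possess triangulations it must be triangulable, which by the arc count excludes $g = 0$; hence $g \geq 1$, and by the definition preceding Theorem~\ref{t:const} the resulting mutation class is $\cQ_{g,0}$, giving case~(b). The identification in~(b) is thus immediate; for~(a) one checks directly that the thrice-punctured sphere has a triangulation consisting of two triangles glued along their three common arcs, and that these two triangles induce opposite cyclic orientations, so that for each ordered pair of arcs the arrows contributed by the two triangles point in opposite directions and cancel as $2$-cycles, leaving the quiver with no arrows, i.e.\ a disjoint union of three vertices, for which the constant-arrow property holds trivially. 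I expect the only delicate point to be the bookkeeping that deleting and re-adjoining a puncture keeps us among genuine triangulable marked surfaces, so that Lemma~\ref{l:addp} legitimately applies; this is precisely what the inequality $n = 6g - 6 + 3p \geq 1$ governs, and I anticipate no substantial obstacle beyond it.
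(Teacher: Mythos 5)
Your proposal is correct and follows essentially the same route as the paper: apply Lemma~\ref{l:addp} to conclude that deleting any puncture leaves a non-triangulable surface, hence $M$ is minimal, and then read off the two possible surfaces. The paper states this in two sentences and leaves the classification of non-triangulable closed marked surfaces (and the identification of the thrice-punctured sphere's quiver) implicit, whereas you supply these details via the arc count $n = 6g-6+3p$; this is a harmless elaboration, not a different argument.
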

\begin{proof}
It follows from Lemma~\ref{l:addp} that for any puncture $P$ of $M$,
the marked surface $(S, M \setminus \{P\})$ does not have any
triangulation. Hence $M$ must be minimal with respect to the property
that $(S,M)$ still has a triangulation.
\end{proof}

\begin{lemma}
Let $(S,M)$ be a marked surface with non-empty boundary whose
triangulations give rise to a mutation class consisting of quivers with
the same number of arrows. Then $(S,M)$ must be one of the following:
\begin{enumerate}
\renewcommand{\theenumi}{\alph{enumi}}
\item
A disc with one puncture and at most two marked points on its boundary.
The corresponding quivers are $A_1$ and $A_2$;

\item
A disc with no punctures and four or five marked points on its
boundary. The corresponding quivers are $A_1$ and $A_2$;

\item
A surface of genus $g$ with $b>0$ boundary components such that $(g,b)
\neq (0,1)$ with exactly one marked point on each boundary component
and no punctures. The corresponding mutation class is $\cQ_{g,b}$.
\end{enumerate}
\end{lemma}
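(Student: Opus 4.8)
The plan is to classify bordered marked surfaces with nonempty boundary whose triangulations all give quivers of the same arrow count, by repeatedly applying the reduction lemmas just proved. The strategy mirrors the boundaryless case: I want to argue that such a surface must be \emph{minimal} in a suitable sense, and then identify the finite list of minimal surfaces directly.

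First I would observe that Lemma~\ref{l:addp} forces $(S,M)$ to have no puncture whose removal still leaves a triangulable surface. In fact, since the desired mutation-class property fails the moment we can add a puncture to a triangulable subsurface, I should argue that $(S,M)$ can carry at most one puncture, and that if it is to be nontrivial it should carry none at all except in the degenerate disc cases. Concretely, I would run Lemma~\ref{l:addp} in reverse: if $(S, M\setminus\{P\})$ is triangulable for some puncture $P$, then $(S,M)$ already contains two triangulations with differing arrow counts, a contradiction. This pins down the admissible number of punctures.

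Next I would handle the boundary marked points using Lemma~\ref{l:addb}. The key point is that condition~($\spadesuit$) propagates: the lemma not only produces a marked point whose addition breaks the constant-arrow property, but guarantees that the enlarged triangulation again satisfies~($\spadesuit$). So I would argue contrapositively that a constant-arrow surface cannot admit a triangulation satisfying~($\spadesuit$) unless it is one of the small exceptional discs; equivalently, if any boundary component carries more than one marked point (beyond the small disc cases), one can exhibit a triangulation with a non-self-folded triangle having exactly one boundary side, apply Lemma~\ref{l:addb} backwards, and derive a contradiction. This step forces exactly one marked point on each boundary component and no punctures, which is precisely the defining data of $\cQ_{g,b}$. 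The low-complexity surfaces---discs with few marked points or one puncture---must be treated by hand, checking that their small mutation classes indeed reduce to $A_1$ or $A_2$.

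The main obstacle I expect is establishing that condition~($\spadesuit$) is actually \emph{satisfiable} whenever the surface has ``too many'' marked points, so that Lemma~\ref{l:addb} can be invoked. One must verify that any triangulation (or at least some triangulation obtainable by flips) of a surface with a boundary component carrying two or more marked points contains a non-self-folded triangle with exactly one boundary side; self-folded triangles and configurations where every triangle has zero or two boundary sides are the delicate cases, and ruling these out---or showing one can flip into a triangulation avoiding them---requires a careful local analysis of the combinatorics of ideal triangulations near the boundary. Once that satisfiability is in hand, the classification into cases (a), (b), (c) follows by enumerating which $(g,b)$ and marked-point/puncture data survive both reduction lemmas, with the exceptional discs pinned down by inspecting their explicit small mutation classes.
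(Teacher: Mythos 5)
Your overall strategy is the same as the paper's: Lemma~\ref{l:addp} rules out removable punctures, and Lemma~\ref{l:addb} together with the propagation of condition~($\spadesuit$) bounds the number of marked points on each boundary component. However, the pivot of your second step is stated incorrectly, and as stated it is false. You claim that ``a constant-arrow surface cannot admit a triangulation satisfying~($\spadesuit$) unless it is one of the small exceptional discs.'' The surfaces of case~(c) themselves refute this: the annulus with one marked point on each boundary circle is constant-arrow (its class is $\cQ_{0,2}$, the Kronecker quiver), yet every one of its triangulations satisfies~($\spadesuit$), since each of its two triangles has the two arcs and exactly one boundary segment as sides. The hypothesis of Lemma~\ref{l:addb} concerns the \emph{smaller} surface $(S,M')$, not $(S,M)$: what the constant-arrow property forbids is that $(S,M)$ be obtainable by adding a boundary point on the distinguished segment of a ($\spadesuit$)-triangulation of some triangulable $(S,M')$. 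Your ``equivalently'' rephrasing --- exhibit a ($\spadesuit$)-triangulation and apply the lemma ``backwards'' --- is only correct if the triangulation is exhibited on $(S,M)$ \emph{minus} a point; exhibiting it on $(S,M)$ itself, as your preceding sentence suggests, makes the argument collapse, and indeed would ``rule out'' the correct answer $\cQ_{g,b}$.

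Relatedly, you have misplaced the main difficulty. You expect the hard step to be showing that ($\spadesuit$) is satisfiable for arbitrary surfaces with many marked points (possibly after flips, with a delicate local analysis near the boundary). The propagation clause of Lemma~\ref{l:addb} exists precisely to avoid this: one only needs explicit ($\spadesuit$)-triangulations for the three \emph{minimal} configurations --- the once-punctured disc with two boundary points, the unpunctured disc with five boundary points, and genus $g$ with $b$ boundary components, $(g,b)\neq(0,1)$, with one point per component --- and then one inducts upward, since each application of the lemma both breaks the constant-arrow property of the enlarged surface and hands over a new ($\spadesuit$)-triangulation for the next addition. This is exactly how the paper argues: it lists those minimal ($\spadesuit$)-configurations and concludes that the marked-point counts cannot exceed them. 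With your contrapositive corrected (($\spadesuit$) checked one point below $(S,M)$, base cases checked only for the minimal surfaces), your outline becomes the paper's proof.
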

\begin{proof}
Applying Lemma~\ref{l:addp} we deduce that the number $b$ of boundary
components of $S$, its genus $g$ and the number of punctures in $M$
must be one of the following:
\begin{itemize}
\item
$b=1$, $g=0$ (disc) with one puncture;

\item
$b=1$, $g=0$ (disc) with no punctures;

\item
$g=0$, $b \geq 2$ with no punctures;

\item
$g>0$, $b \geq 1$ with no punctures.
\end{itemize}

The following bordered marked surfaces of genus $g$ with $b>0$ boundary
components admit triangulations satisfying the
condition~($\spadesuit$):
\begin{itemize}
\item
$b=1$, $g=0$ (disc) with one puncture and two marked points on its
boundary;

\item
$b=1$, $g=0$ (disc) with no punctures and five marked points on its
boundary;

\item
$(g,b) \neq (0,1)$ with one marked point on each boundary component.
\end{itemize}
By applying Lemma~\ref{l:addb} we see that the numbers of marked points
on each boundary component of $S$ cannot exceed the ones in the list
above, and the result follows.
\end{proof}

\section{On the mutation classes $\cQ_{g,b}$}
\label{sec:Qgb}

In order to show the implication (\ref{it:mut:Qgb}) $\Rightarrow$
(\ref{it:mut:same}) in Theorem~\ref{t:const}, it remains to show that
each of the mutation classes $\cQ_{g,b}$ for $(g,b) \not \in \{(0,0),
(0,1)\}$ consists of quivers with the same number of arrows.

\subsection{Mutations of quivers from unpunctured marked surfaces}
\label{sec:unpunctured}

For the classes $\cQ_{g,b}$ with $b>0$ this has already been shown in
our previous work~\cite{Ladkani11}, but we give here a different proof
by characterizing the mutations of quivers arising from triangulations
of marked bordered surfaces without punctures that preserve the number
of arrows. In fact, we show that for such quivers the converse of
Lemma~\ref{l:in1out1} holds.

For a vertex $k$ in a quiver $Q$, recall that its \emph{in-degree} is
the number of arrows ending at $k$. Similarly, its \emph{out-degree} is
the number of arrows starting at $k$. The \emph{neighborhood} of $k$ is
the full subquiver of $Q$ on the set of vertices consisting of $k$, the
vertices $i$ having arrows $i \to k$ and the vertices $j$ having arrows
$k \to j$. Mutation at $k$ does not change any arrows outside the
neighborhood of $k$. Thus, when assessing its effect on the number of
arrows, it is enough to consider its effect on the neighborhood of $k$.

\begin{prop} \label{p:in1out1}
Let $Q$ be a quiver arising from a triangulation of a marked bordered
surface without punctures. For a vertex $k$ of $Q$, the following
conditions are equivalent:
\begin{enumerate}
\renewcommand{\theenumi}{\roman{enumi}}
\item
$k$ has in-degree $1$ and out-degree $1$.

\item
The quivers $Q$ and $\mu_k(Q)$ do not have the same number of arrows.
\end{enumerate}
\end{prop}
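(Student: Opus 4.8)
The plan is to prove the nontrivial direction (ii) $\Rightarrow$ (i) by analyzing the local neighborhood of $k$, since Lemma~\ref{l:in1out1} already supplies (i) $\Rightarrow$ (ii). The key structural input is that in a triangulation of an unpunctured surface, every arc sits in exactly two triangles, and the arrows of $Q$ incident to $k$ are read off from the (at most two) triangles containing the arc corresponding to $k$. This severely limits the possible neighborhoods: the arrows at $k$ come in a highly constrained pattern, and I would organize the proof around enumerating these patterns.

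First I would translate the condition on $k$ into combinatorial data on the arc $\gamma_k$ it represents. Let $\Delta_1, \Delta_2$ be the two triangles sharing $\gamma_k$ (with the understanding that a side may be a boundary segment rather than an arc, contributing no vertex). Each non-self-folded triangle with all three sides being arcs contributes a $3$-cycle to $Q$, and in particular each such triangle $\Delta_i$ gives $k$ exactly one incoming and one outgoing arrow \emph{within that triangle}; a triangle with $\gamma_k$ and one boundary side contributes only a single arrow at $k$; a triangle with two boundary sides contributes no arrow at $k$. Because there are no punctures, self-folded triangles do not arise, so these are the only cases. I would record the resulting in-degree/out-degree contributions as a short case table over the types of $\Delta_1$ and $\Delta_2$.

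Next I would compute, in each case, the effect of $\mu_k$ on the arrow count directly from the mutation rule, exploiting that only arrows \emph{inside} the neighborhood of $k$ can change. The decisive observation is that when $k$ has in-degree or out-degree $\geq 2$, the incoming arrows $i \to k$ and outgoing arrows $k \to j$ pair up across the two distinct triangles in such a way that the composite arrows $i \to j$ created in step~(i) of mutation \emph{cancel} against arrows already present (coming from the other triangle sharing an endpoint), so that the total count is preserved; whereas the unique degenerate configuration giving in-degree $1$ and out-degree $1$ is exactly the one where no such cancellation occurs and the count shifts by one, reproving Lemma~\ref{l:in1out1}. Concretely, one checks that if both $\Delta_1,\Delta_2$ are internal $3$-cycles through $k$, then $k$ has in-degree $2$ and out-degree $2$ and the created arrows cancel with the two diagonals, leaving the count fixed; the mixed and boundary cases are verified similarly.

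The main obstacle I anticipate is handling the arrows \emph{between} the neighbors $i$ and $j$ correctly, since the triangulation may have the two triangles $\Delta_1,\Delta_2$ sharing not only $\gamma_k$ but also further arcs or vertices, creating arrows $i \to j$ or $j \to i$ outside the two triangles that must be accounted for when step~(i) and step~(iii) of the mutation are applied. I would address this by arguing that for the purpose of counting it suffices to work within the union of the two triangles containing $\gamma_k$ together with the finitely many arcs completing those neighbors' links, and that the relevant arrows are governed entirely by how many of the triangles at $k$ are internal $3$-cycles. Assembling the case analysis, exactly one configuration (two boundary-adjacent triangles, or one internal and one boundary triangle arranged so that $k$ meets one arrow in and one arrow out) yields in-degree $1$, out-degree $1$, and these are precisely the cases where the count changes, which establishes the equivalence.
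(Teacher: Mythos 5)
Your skeleton is the same as the paper's: reduce to the two triangles containing the arc of $k$, enumerate the local configurations, and check the effect of the flip on the arrow count (this is exactly what Tables~\ref{t:segments} and~\ref{t:nosegments} do), with Lemma~\ref{l:in1out1} supplying (i)$\Rightarrow$(ii). The gap is in your ``decisive observation''. You assert that the arrows among the neighbours of $k$ relevant to the cancellation are ``governed entirely by how many of the triangles at $k$ are internal $3$-cycles'', and you use the unpunctured hypothesis only to exclude self-folded triangles. Both claims are false, and they conceal exactly the content that makes the proposition true. What must actually be proved is a list of constraints on arrows between the neighbours of $k$: the ``diagonal'' arrows induced by the two triangles (e.g.\ $j_1\to i_1$ and $j_2\to i_2$ in the paper's case~(4c)) survive the removal of $2$-cycles, while no ``cross'' arrows such as $j_1\to i_2$ or $j_2\to i_1$ are present; and in the cases where the two triangles share a second side (the paper's cases~(2a), (3a), (4a), (4b), which your enumeration by triangle type does not isolate) one needs exact equalities like $a_{ji}=1$ or $a_{ji_1}=a_{ji_2}=1$, for which the paper gives separate ad hoc arguments. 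These constraints do not follow from counting internal triangles; they follow from the fact that every marked point lies on the boundary, so the arcs incident to it form a fan bounded by boundary segments rather than a full cycle around the point. This is the essential use of unpuncturedness, and your proposal never makes it.

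That this is fatal rather than a routine omission is shown by the paper's own example in the last remark of Section~\ref{sec:dereq}: quiver~(a) there arises from a triangulation of the once-punctured pentagon consisting of two internal triangles sharing an arc, one triangle with one boundary segment, and two triangles with two boundary segments --- in particular with \emph{no} self-folded triangles. The vertex lying on both $3$-cycles of that quiver has in-degree $2$ and out-degree $2$, and both of its triangles are internal, so your argument applies verbatim and concludes that mutation there preserves the number of arrows. In fact that mutation drops the count from $7$ to $5$: the cross arrow supplied by the third triangle (between the outer vertices of the two $3$-cycles) cancels one of the created composites, which is precisely the phenomenon your claimed pairing rules out. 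So the absence of self-folded triangles is not where punctures enter; the missing neighbour constraints are, and without them the case analysis does not close.
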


\begin{table}
\[
\begin{array}{|c|c|c||c|c|}
\hline
%
%
1 &
\begin{array}{c}
\xymatrix@=3pc{
{\cdot} \ar@{-}[r]^{i} \ar@<-0.5ex>@{.}[d] \ar@{-}[d]
& {\cdot} \ar@<0.5ex>@{.}[d] \ar@{-}[d] \\
{\cdot} \ar@{-}[r] \ar@<-0.5ex>@{.}[r] \ar@{-}[ur]^k & {\cdot}
}
\end{array}
&
\begin{array}{c}
\xymatrix{
{i} \ar[r] & {k}
}
\end{array}
&
\begin{array}{c}
\xymatrix{
{i} & {k} \ar[l]
}
\end{array}
&
\begin{array}{c}
\xymatrix@=3pc{
{\cdot} \ar@{-}[r]^{i} \ar@<-0.5ex>@{.}[d] \ar@{-}[d]
& {\cdot} \ar@<0.5ex>@{.}[d] \ar@{-}[d] \\
{\cdot} \ar@{-}[r] \ar@<-0.5ex>@{.}[r] & {\cdot} \ar@{-}[ul]_k
}
\end{array}
\\ \hline
%
%
2a &
\begin{array}{c}
\xymatrix@=3pc{
{\cdot} \ar@{-}[r]^{i} \ar@<-0.5ex>@{.}[d] \ar@{-}[d]
& {\cdot} \ar@<0.5ex>@{.}[d] \ar@{-}[d] \\
{\cdot} \ar@{-}[r]_{i} \ar@{-}[ur]^k & {\cdot}
}
\end{array}
&
\begin{array}{c}
\xymatrix{
{i} \ar@<-0.5ex>[r] \ar@<0.5ex>[r] & {k}
}
\end{array}
&
\begin{array}{c}
\xymatrix{
{i} & {k} \ar@<-0.5ex>[l] \ar@<0.5ex>[l]
}
\end{array}
&
\begin{array}{c}
\xymatrix@=3pc{
{\cdot} \ar@{-}[r]^{i} \ar@<-0.5ex>@{.}[d] \ar@{-}[d]
& {\cdot} \ar@<0.5ex>@{.}[d] \ar@{-}[d] \\
{\cdot} \ar@{-}[r]_{i} & {\cdot} \ar@{-}[ul]_k
}
\end{array}
\\ \hline
2b &
\begin{array}{c}
\xymatrix@=3pc{
{\cdot} \ar@{-}[r]^{i_1} \ar@<-0.5ex>@{.}[d] \ar@{-}[d]
& {\cdot} \ar@<0.5ex>@{.}[d] \ar@{-}[d] \\
{\cdot} \ar@{-}[r]_{i_2} \ar@{-}[ur]^k & {\cdot}
}
\end{array}
&
\begin{array}{c}
\xymatrix@R=1pc{
{i_1} \ar[dr] \\
& {k} \\
{i_2} \ar[ur]
}
\end{array}
&
\begin{array}{c}
\xymatrix@R=1pc{
{i_1} \\
& {k} \ar[ul] \ar[dl] \\
{i_2}
}
\end{array}
&
\begin{array}{c}
\xymatrix@=3pc{
{\cdot} \ar@{-}[r]^{i_1} \ar@<-0.5ex>@{.}[d] \ar@{-}[d]
& {\cdot} \ar@<0.5ex>@{.}[d] \ar@{-}[d] \\
{\cdot} \ar@{-}[r]_{i_2} & {\cdot} \ar@{-}[ul]_k
}
\end{array}
\\ \hline
2c &
\begin{array}{c}
\xymatrix@=3pc{
{\cdot} \ar@{-}[r]^{i} \ar@{-}[d]_{j}
& {\cdot} \ar@<0.5ex>@{.}[d] \ar@{-}[d] \\
{\cdot} \ar@<-0.5ex>@{.}[r] \ar@{-}[r] \ar@{-}[ur]^k & {\cdot}
}
\end{array}
&
\begin{array}{c}
\xymatrix@R=1pc{
{i} \ar[dr] \\
& {k} \ar[dl] \\
{j} \ar[uu]
}
\\
a_{ji} \geq 1
\end{array}
&
\begin{array}{c}
\xymatrix@R=1pc{
{i} \\
& {k} \ar[ul] \\
{j} \ar[ur]
}
\\
a_{ij} = 0
\end{array}
&
\begin{array}{c}
\xymatrix@=3pc{
{\cdot} \ar@{-}[r]^{i} \ar@{-}[d]_{j}
& {\cdot} \ar@<0.5ex>@{.}[d] \ar@{-}[d] \\
{\cdot} \ar@<-0.5ex>@{.}[r] \ar@{-}[r] & {\cdot} \ar@{-}[ul]_k
}
\end{array}
\\ \hline
%
%
3a &
\begin{array}{c}
\xymatrix@=3pc{
{\cdot} \ar@{-}[r]^{i} \ar@{-}[d]_{j}
& {\cdot} \ar@<0.5ex>@{.}[d] \ar@{-}[d] \\
{\cdot} \ar@{-}[r]_{i} \ar@{-}[ur]^k & {\cdot}
}
\end{array}
&
\begin{array}{c}
\xymatrix@R=1pc{
{i} \ar@<-0.5ex>[dr] \ar@<0.5ex>[dr] \\
& {k} \ar[dl] \\
{j} \ar[uu]
}
\\
a_{ji} = 1
\end{array}
&
\begin{array}{c}
\xymatrix@R=1pc{
{i} \ar[dd] \\
& {k} \ar@<-0.5ex>[ul] \ar@<0.5ex>[ul] \\
{j} \ar[ur]
}
\\
a_{ij} = 1
\end{array}
&
\begin{array}{c}
\xymatrix@=3pc{
{\cdot} \ar@{-}[r]^{i} \ar@{-}[d]_{j}
& {\cdot} \ar@<0.5ex>@{.}[d] \ar@{-}[d] \\
{\cdot} \ar@{-}[r]_{i} & {\cdot} \ar@{-}[ul]_k
}
\end{array}
\\ \hline
3b &
\begin{array}{c}
\xymatrix@=3pc{
{\cdot} \ar@{-}[r]^{i_1} \ar@{-}[d]_{j}
& {\cdot} \ar@<0.5ex>@{.}[d] \ar@{-}[d] \\
{\cdot} \ar@{-}[r]_{i_2} \ar@{-}[ur]^k & {\cdot}
}
\end{array}
&
\begin{array}{c}
\xymatrix@R=1pc{
{i_1} \ar[dr] \\
& {k} \ar[dl]  \\
{j} \ar[uu] & & {i_2} \ar[ul]
}
\
\\
a_{ji_1} \geq 1,\, a_{ji_2}=0
\end{array}
&
\begin{array}{c}
\xymatrix@R=1pc{
{i_1} \\
& {k} \ar[ul] \ar[dr] \\
{j} \ar[ur] & & {i_2} \ar[ll]
}
\\
a_{i_1 j} = 0,\, a_{i_2 j} \geq 1
\end{array}
&
\begin{array}{c}
\xymatrix@=3pc{
{\cdot} \ar@{-}[r]^{i_1} \ar@{-}[d]_{j}
& {\cdot} \ar@<0.5ex>@{.}[d] \ar@{-}[d] \\
{\cdot} \ar@{-}[r]_{i_2} & {\cdot} \ar@{-}[ul]_k
}
\end{array}
\\ \hline
\end{array}
\]
\caption{Neighborhoods of $k$ when at least one side is a boundary
segment.} \label{t:segments}
\end{table}

\begin{proof}
In the triangulation corresponding to $Q$, the arc corresponding to the
vertex $k$ is a side of two triangles whose other sides are denoted
$i'$, $j'$ and $i''$, $j''$ as shown below:
\[
\xymatrix@=3pc{
{\cdot} \ar@{-}[r]^{i'} \ar@{-}[d]_{j'} & {\cdot} \ar@{-}[d]^{j''} \\
{\cdot} \ar@{-}[r]_{i''} \ar@{-}[ur]^k & {\cdot} }
\]

Some of these sides may be boundary segments. In addition, it may
happen that the sides denoted $i'$ and $i''$ are in fact the same arc
in the triangulation, and similarly for $j'$ and $j''$.

The proof goes by examining all the possible cases and computing the
corresponding neighborhoods of $k$ and their mutations. The details of
this verification are given in a concise form in Table~\ref{t:segments}
for the cases where at least one of the sides is a boundary segment and
in Table~\ref{t:nosegments} for the remaining cases where all the sides
are arcs.

Each row in these tables represents one such case and its mutation. The
left two columns show the triangles whose side is the arc $k$ together
with (part of) the corresponding neighborhood of $k$. The right two
columns show what happens under mutation at $k$, which corresponds to a
flip of the arc $k$. Note that mutation of each of the cases~(4a) and (4c)
leads to the same case (up to relabeling of vertices/arcs), hence
there are actually only four different cases (and not six) in
Table~\ref{t:nosegments}.

In the pictures, we use the following conventions. When drawing the
triangulation, a line of the form $\xymatrix@1{\ar@{-}[r] &}$ denotes
an arc and a line of the form $\xymatrix@1{\ar@<0.5ex>@{.}[r]
\ar@{-}[r] &}$ denotes a boundary segment. As for the corresponding
quiver, we draw only the arrows that must be present in the
neighborhood of $k$, in particular all the arrows starting or
terminating at $k$, and complement this by writing down some extra
conditions that must be also satisfied. These are given in terms of
numbers of arrows between vertices, where we denote by $a_{ij}$ the
number of arrows starting at the vertex $i$ and ending at $j$.

Most of these constraints follow from the fact that the marked points
lie on the boundary and are not punctures. In addition, in case~(3a) it
cannot happen that $a_{ji}=2$ since otherwise the neighborhood of $i$
would fall into case~(4a), but there is only one arrow $k \to j$. In
case~(4b) we have $a_{j i_1} \geq 1$ and $a_{j i_2} \geq 1$, but these
are actually equalities since otherwise the vertex $j$ would have too
many outgoing arrows. Finally, note that case~(4a) actually never
occurs for an unpunctured surface, as it leads to a triangulation of
the torus with one puncture.
\end{proof}

\begin{table}
\[
\begin{array}{|c|c|c||c|c|}
\hline
4a &
\begin{array}{c}
\xymatrix@=3pc{
{\cdot} \ar@{-}[r]^{i} \ar@{-}[d]_{j}
& {\cdot} \ar@{-}[d]_{j} \\
{\cdot} \ar@{-}[r]_{i} \ar@{-}[ur]^k & {\cdot}
}
\end{array}
&
\begin{array}{c}
\xymatrix@R=1pc{
{i} \ar@<-0.5ex>[dr] \ar@<0.5ex>[dr] \\
& {k} \ar@<-0.5ex>[dl] \ar@<0.5ex>[dl] \\
{j} \ar@<-0.5ex>[uu] \ar@<0.5ex>[uu]
}
\end{array}
&
\begin{array}{c}
\xymatrix@R=1pc{
{i} \ar@<-0.5ex>[dd] \ar@<0.5ex>[dd] \\
& {k} \ar@<-0.5ex>[ul] \ar@<0.5ex>[ul] \\
{j} \ar@<-0.5ex>[ur] \ar@<0.5ex>[ur]
}
\end{array}
&
\begin{array}{c}
\xymatrix@=3pc{
{\cdot} \ar@{-}[r]^{i} \ar@{-}[d]^{j}
& {\cdot} \ar@{-}[d]^{j} \\
{\cdot} \ar@{-}[r]_{i} & {\cdot} \ar@{-}[ul]_k
}
\end{array}
\\ \hline
4b &
\begin{array}{c}
\xymatrix@=3pc{
{\cdot} \ar@{-}[r]^{i_1} \ar@{-}[d]_{j}
& {\cdot} \ar@{-}[d]_{j} \\
{\cdot} \ar@{-}[r]_{i_2} \ar@{-}[ur]^k & {\cdot}
}
\end{array}
&
\begin{array}{c}
\xymatrix@R=1pc{
{i_1} \ar[dr] \\
& {k} \ar@<-0.5ex>[dl] \ar@<0.5ex>[dl] \\
{j} \ar[uu] \ar[rr] & & {i_2} \ar[ul]
}
\\
a_{j i_1} = a_{j i_2} = 1
\end{array}
&
\begin{array}{c}
\xymatrix@R=1pc{
{i_1} \ar[dd] \\
& {k} \ar[ul] \ar[dr] \\
{j} \ar@<-0.5ex>[ur] \ar@<0.5ex>[ur] & & {i_2} \ar[ll]
}
\\
a_{i_1 j} = a_{i_2 j} = 1
\end{array}
&
\begin{array}{c}
\xymatrix@=3pc{
{\cdot} \ar@{-}[r]^{i_1} \ar@{-}[d]^{j}
& {\cdot} \ar@{-}[d]^{j} \\
{\cdot} \ar@{-}[r]_{i_2} & {\cdot} \ar@{-}[ul]_k
}
\end{array}
\\ \hline
4c &
\begin{array}{c}
\xymatrix@=3pc{
{\cdot} \ar@{-}[r]^{i_1} \ar@{-}[d]_{j_1}
& {\cdot} \ar@{-}[d]_{j_2} \\
{\cdot} \ar@{-}[r]_{i_2} \ar@{-}[ur]^k & {\cdot}
}
\end{array}
&
\begin{array}{c}
\xymatrix@R=1pc{
{i_1} \ar[dr] & & {j_2} \ar[dd] \\
& {k} \ar[dl] \ar[ur] \\
{j_1} \ar[uu] & & {i_2} \ar[ul]
}
\\
a_{j_1 i_1} \geq 1,\, a_{j_2 i_2} \geq 1
\\
a_{j_1 i_2} = a_{j_2 i_1} = 0
\end{array}
&
\begin{array}{c}
\xymatrix@R=1pc{
{i_1} \ar[rr] & & {j_2} \ar[dl] \\
& {k} \ar[ul] \ar[dr] \\
{j_1} \ar[ur] & & {i_2} \ar[ll]
}
\\
a_{i_1 j_2} \geq 1,\, a_{i_2 j_1} \geq 1 \\
a_{i_1 j_1} = a_{i_2 j_2} = 0
\end{array}
&
\begin{array}{c}
\xymatrix@=3pc{
{\cdot} \ar@{-}[r]^{i_1} \ar@{-}[d]^{j_1}
& {\cdot} \ar@{-}[d]^{j_2} \\
{\cdot} \ar@{-}[r]_{i_2} & {\cdot} \ar@{-}[ul]_k
}
\end{array}
\\ \hline
\end{array}
\]
\caption{Neighborhoods of $k$ when all sides are arcs.}
\label{t:nosegments}
\end{table}

\begin{remark}
In order to make the presentation concise, we have not explicitly
written down all the possible neighborhoods of $k$, but rather shown only
the arrows that must be present together with additional constraints
on numbers of arrows which are altogether enough to guarantee that
the mutation at $k$ does not change the total number of arrows.

Not every quiver satisfying these constraints is actually realized
as a neighborhood of a vertex $k$ in a quiver arising from a triangulation.
For example, we have not implied any restrictions on quantities like
$a_{i_1 i_2}$ and $a_{i_2 i_1}$ which are not affected by mutation at $k$
but are obviously bounded by $2$ for quivers arising from triangulations.
\end{remark}

\begin{remark}
The statement of Proposition~\ref{p:in1out1} does not hold for quivers
in general, not even for those arising from triangulations of marked
surfaces with punctures. For example, the left quiver below (an
orientation of the Dynkin diagram $D_4$) arises from a triangulation of
the disc with one puncture and four marked points on its
boundary~\cite[\S 6]{FST08}. The vertex $1$ has in-degree $2$, but the
corresponding mutation, given by the right quiver, has two more arrows.
\begin{align*}
\xymatrix@R=1pc{
{\bullet} \ar[dr] \\
& {\bullet_1} \ar[dl] \\
{\bullet} & & {\bullet} \ar[ul]
}
& &
\xymatrix@R=1pc{
{\bullet} \ar[dd] \\
& {\bullet_1} \ar[ul] \ar[dr] \\
{\bullet} \ar[ur] & & {\bullet} \ar[ll]
}
\end{align*}
\end{remark}

\begin{lemma} \label{l:Qgb:in1out1}
Let $Q \in \cQ_{g,b}$ for $b>0$. Then there are no vertices of $Q$ with
in-degree $1$ and out-degree $1$.
\end{lemma}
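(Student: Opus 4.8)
The plan is to leverage Proposition~\ref{p:in1out1} together with the case analysis underlying it. Since flips of triangulations realize all mutations, any $Q \in \cQ_{g,b}$ arises from some triangulation $T$ of the surface $S$ of genus $g$ with $b>0$ boundary components, exactly one marked point on each, and no punctures. Fix a vertex $k$, corresponding to an arc of $T$ that is a side of two ideal triangles. First I would observe that among all the neighborhoods enumerated in Tables~\ref{t:segments} and~\ref{t:nosegments}, which exhaust the possibilities for an unpunctured surface, the vertex $k$ has in-degree $1$ and out-degree $1$ in exactly one case, namely~(2c): cases~(2a) and~(2b) give in-degree $2$ and out-degree $0$; cases~(1),~(3a),~(3b) have total degree $1$ or $3$; and cases~(4a)--(4c) give in-degree $2$ and out-degree $2$. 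Thus a vertex with in-degree $1$ and out-degree $1$ must sit in configuration~(2c).

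Reading off the triangulation in case~(2c), the arc $k$ is then a common side of one triangle all of whose sides are arcs and one triangle two of whose remaining sides are boundary segments. Hence the whole statement reduces to the purely combinatorial claim: under the standing hypotheses (no punctures, exactly one marked point on each boundary component) no ideal triangle has two of its sides on the boundary. This is the step I would carry out next, and it is the crux of the proof.

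To establish this claim I would argue by collapsing endpoints. A boundary component carrying a single marked point consists of exactly one boundary segment, which is therefore a loop based at that marked point; in particular both of its endpoints coincide. Suppose a triangle had two boundary sides $c'$ and $c''$. They share a vertex, and since each has both endpoints equal to the single marked point of its own component, all three vertices of the triangle collapse to one marked point $p$. Both $c'$ and $c''$ then pass through $p$; as distinct boundary components have distinct marked points, $c'$ and $c''$ must lie on the same component, which has only one boundary segment, forcing $c' = c''$. This contradicts $c'$ and $c''$ being two distinct sides of a triangle, since a boundary segment borders exactly one triangle as a single side and so cannot be self-folded along it. Therefore no such triangle exists, case~(2c) never occurs, and $k$ cannot have in-degree $1$ and out-degree $1$.

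The main obstacle is precisely the collapsing argument of the last paragraph: one must check carefully that the two boundary sides are genuinely distinct edges and that no degenerate (self-folded) configuration evades the contradiction. This is also exactly the point at which the defining hypotheses of $\cQ_{g,b}$ enter—both $b>0$ and the one-marked-point condition are used to force the loop structure of boundary segments—so the argument is specific to these classes and is not expected to extend verbatim to surfaces with more marked points or with punctures.
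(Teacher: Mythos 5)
Your proposal is correct and takes essentially the same route as the paper: the paper's proof likewise observes that a vertex of in-degree $1$ and out-degree $1$ in a quiver from an unpunctured triangulation corresponds exactly to case~(2c) of Table~\ref{t:segments}, and rules this out because it would force a boundary component containing at least two marked points. Your collapsing argument simply spells out in detail that last implication, which the paper asserts in a single clause.
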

\begin{proof}
Indeed, the existence of such vertex in a quiver arising from a
triangulation of an unpunctured bordered marked surface corresponds to
case~(2c) in Table~\ref{t:segments}, implying that there is a
boundary component containing at least two marked points.
\end{proof}

\begin{cor}
The quivers in a class $\cQ_{g,b}$ when $b>0$ have the same number
of arrows.
\end{cor}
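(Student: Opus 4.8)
The goal is to show that all quivers in $\cQ_{g,b}$ with $b>0$ have the same number of arrows. The plan is to combine the characterization in Proposition~\ref{p:in1out1} with the structural fact from Lemma~\ref{l:Qgb:in1out1}. By definition, every quiver $Q \in \cQ_{g,b}$ arises from a triangulation of an unpunctured marked surface (a surface of genus $g$ with $b$ boundary components, each carrying exactly one marked point), and the whole class is connected under mutation. So it suffices to show that mutating any such $Q$ at any vertex $k$ preserves the number of arrows, since then by induction along any sequence of mutations the arrow count stays constant throughout the class.

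First I would fix $Q \in \cQ_{g,b}$ and an arbitrary vertex $k$. Since the surface is unpunctured, Proposition~\ref{p:in1out1} applies: the mutation $\mu_k(Q)$ fails to preserve the number of arrows if and only if $k$ has in-degree $1$ and out-degree $1$. Thus to conclude that $\mu_k(Q)$ has the same number of arrows as $Q$, I only need to rule out the possibility that $k$ has in-degree $1$ and out-degree $1$. But this is precisely the content of Lemma~\ref{l:Qgb:in1out1}, which asserts that no vertex of a quiver in $\cQ_{g,b}$ (for $b>0$) has in-degree $1$ and out-degree $1$. Hence no single mutation changes the arrow count.

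The second step is the propagation: because $\cQ_{g,b}$ is by definition a full mutation class, every quiver in it is reached from a representative by a finite sequence of mutations, and the class is closed under mutation. Lemma~\ref{l:Qgb:in1out1} is stated for \emph{every} $Q \in \cQ_{g,b}$, not just for one representative, so the ``no in-degree-$1$ out-degree-$1$ vertex'' property holds at each intermediate quiver along any such sequence. Applying the previous paragraph at each step shows that the arrow count is invariant under every individual mutation, and therefore constant across the entire class.

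I expect the main (and essentially only) subtlety to be bookkeeping about what has already been established rather than any genuine obstacle: the heavy lifting was done in Proposition~\ref{p:in1out1} (the case analysis of neighborhoods in Tables~\ref{t:segments} and~\ref{t:nosegments}) and in Lemma~\ref{l:Qgb:in1out1}. The one point worth stating carefully is that Lemma~\ref{l:Qgb:in1out1} applies to \emph{all} members of the class, so that Proposition~\ref{p:in1out1} can be invoked at every vertex of every quiver encountered; granting that, the corollary is immediate.
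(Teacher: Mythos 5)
Your proof is correct and is essentially the paper's own argument: the paper proves this corollary in one line by combining Proposition~\ref{p:in1out1} with Lemma~\ref{l:Qgb:in1out1}, exactly as you do. Your additional care in noting that Lemma~\ref{l:Qgb:in1out1} applies to every member of the class (so the argument propagates along any mutation sequence) is a valid spelling-out of what the paper leaves implicit, not a different route.
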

\begin{proof}
Combine Proposition~\ref{p:in1out1} and Lemma~\ref{l:Qgb:in1out1}.
\end{proof}

\subsection{Quivers from once punctured closed surfaces}
\label{sec:Qg0}

We are left with the mutation classes $\cQ_{g,0}$ for $g>0$.

\begin{prop}
Let $Q$ be a quiver in $\cQ_{g,0}$ and let $k$ be a vertex of $Q$. Then
the neighborhood of $k$ is one of the four appearing in
Table~\ref{t:nosegments}. In particular, its in-degree and out-degree
are both $2$.
\end{prop}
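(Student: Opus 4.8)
The plan is to reduce the statement to the combinatorial structure of triangulations of a once-punctured closed surface of genus $g>0$, and then enumerate the local neighborhoods. The key observation is that in such a surface there is no boundary at all, so every side of every triangle is an arc, never a boundary segment. This already tells us that none of the cases in Table~\ref{t:segments} (all of which involve at least one boundary segment) can occur, so every neighborhood must be one of the cases in Table~\ref{t:nosegments}, namely (4a), (4b) or (4c). I would make this point explicitly: the vertex $k$ corresponds to an arc which, as in the proof of Proposition~\ref{p:in1out1}, is a side of two triangles with the other four sides labeled $i',j',i'',j''$, and since the surface is unpunctured-in-the-boundary-sense (indeed has empty boundary) all four of these are genuine arcs.

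Next I would address which of (4a), (4b), (4c) can actually arise. Recall from the final paragraph of the proof of Proposition~\ref{p:in1out1} that case~(4a) ``actually never occurs for an unpunctured surface, as it leads to a triangulation of the torus with one puncture.'' So the subtlety here is that a once-punctured closed surface, while unpunctured on its (empty) boundary, does contain a puncture, and one must check whether this reopens case~(4a). I would argue that for $g>0$ the relevant obstruction is different: the analysis of Proposition~\ref{p:in1out1} used that the marked points lie on the boundary to rule out self-folded configurations, and the once-punctured setting requires re-examining each case to confirm that the neighborhood still falls under (4a), (4b) or (4c). The cleanest route is to note that each vertex of any quiver in $\cQ_{g,0}$ has in-degree and out-degree both equal to $2$ — this is asserted in Theorem~\ref{t:const} — and that (4a), (4b), (4c) are precisely the cases in which $k$ has in-degree $2$ and out-degree $2$.

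The core of the proof, then, is to establish that every vertex $k$ has in-degree $2$ and out-degree $2$. In a triangulation of a closed surface each arc is shared by exactly two triangles, and each triangle contributes, via its cyclic orientation, arrows to the quiver. Since there is a single puncture, I would count incidences: an arc $k$ separating two triangles receives, from the two triangles containing it, exactly two arrows pointing toward $k$ and two pointing away, provided neither triangle is self-folded at $k$ and the four surrounding sides are distinct arcs contributing one arrow each. I expect the main obstacle to be ruling out the degenerate configurations, namely self-folded triangles and the coincidences $i'=i''$ or $j'=j''$, which for a single puncture can genuinely occur and would alter the local arrow count. I would handle these by appealing to the explicit representative triangulations of $\cQ_{g,0}$ constructed in this section (Figure~\ref{fig:Qg0}) together with the fact that mutation sends a neighborhood of type (4a)/(4b)/(4c) back into the same family — the remark after Proposition~\ref{p:in1out1} notes that mutating (4a) or (4c) returns the same case — so that the in-degree-$2$, out-degree-$2$ property, once verified on one representative, is preserved under all mutations and hence holds throughout the class.

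Finally, I would tie the two halves together: having shown in-degree and out-degree are both $2$ at every vertex, the only neighborhoods consistent with this at a vertex of an unpunctured-boundary triangulation are exactly (4a), (4b), (4c), which gives the claim. The delicate step worth spelling out is confirming that case~(4a), excluded for genuinely unpunctured surfaces, is permissible here precisely because of the one puncture — indeed the torus with one puncture is the $g=1$ member $\cQ_{1,0}$ — so the proposition's allowance of all four neighborhoods is consistent with, rather than contradicted by, the earlier exclusion.
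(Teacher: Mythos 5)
Your proposal has genuine gaps at its two critical junctures. First, the step ``no boundary segments, hence the neighborhood is one of the cases in Table~\ref{t:nosegments}'' does not follow: both tables were compiled in the proof of Proposition~\ref{p:in1out1} under the standing assumption that the surface has \emph{no punctures}, and that assumption was used both to exclude degenerate configurations and to derive the arrow constraints listed in the tables. For a once-punctured closed surface one must rule out, from scratch, (i) self-folded triangles, and (ii) an identification of a side of one of the two triangles adjacent to $k$ with a side of the other one \emph{across} the pairs, i.e.\ $i'=j'$, $i'=j''$, $i''=j'$ or $i''=j''$ --- such an identification would create a $2$-cycle through $k$ whose cancellation destroys the in-degree-$2$/out-degree-$2$ pattern --- and then (iii) re-derive the constraints such as $a_{j i_1}=a_{j i_2}=1$ in case~(4b) and $a_{j_1 i_2}=a_{j_2 i_1}=0$ in case~(4c). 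You address none of (i)--(iii); instead you single out the coincidences $i'=i''$ and $j'=j''$ as the ``degenerate configurations'' to be ruled out, but these are exactly what produces cases~(4a) and~(4b), they are harmless (the vertex $k$ still has in-degree and out-degree $2$), and they genuinely occur in $\cQ_{g,0}$. The paper handles (i) by quoting that a triangulation has $6g-3>2$ arcs, all incident to the single puncture, so no triangle can be self-folded; handles (ii) by observing that such an identification would force a self-folded triangle in the triangulation or in its flip at $k$; and handles (iii) by reading off the counterclockwise order of the arcs around the puncture.

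Second, your fallback argument --- verify in-degree/out-degree $2$ on the representative quivers of Figure~\ref{fig:Qg0} and propagate it through mutations because ``mutation sends a neighborhood of type (4a)/(4b)/(4c) back into the same family'' --- does not work as stated. Table~\ref{t:nosegments} describes the effect of mutation at $k$ only on the neighborhood of $k$ itself; it says nothing about how the neighborhoods of the \emph{other} vertices (in particular the neighbors of $k$) change, so the property ``every vertex has a neighborhood of one of these types'' is not shown to be mutation-invariant by this observation. Moreover, citing Theorem~\ref{t:const} for the in/out-degree statement is circular, since the present proposition is precisely what establishes that clause of the theorem. The paper avoids any induction along mutations: it argues directly on an arbitrary triangulation of the once-punctured closed surface, which is why the self-folded-triangle and cyclic-order arguments above constitute the actual content of its proof.
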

\begin{proof}
From~\cite[Prop.~2.10]{FST08} we see that any triangulation of a
closed surface of genus $g>0$ with one puncture consists of $6g-3$ arcs.
Since $6g-3>2$ and all the arcs are incident to the single puncture, there
cannot be any self-folded triangles.

In the triangulation corresponding to $Q$, the arc corresponding to the
vertex $k$ is a side of two triangles whose other sides are the arcs
$i'$, $j'$ and $i''$, $j''$ as shown in the left picture
\begin{align*}
\xymatrix@=3pc{
{\times} \ar@{-}[r]^{i'} \ar@{-}[d]_{j'} & {\times} \ar@{-}[d]^{j''} \\
{\times} \ar@{-}[r]_{i''} \ar@{-}[ur]^k & {\times}
}
& &
\xymatrix@=3pc{
{\times} \ar@{-}[r]^{i'} \ar@{-}[d]_{j'} & {\times} \ar@{-}[d]^{j''} \\
{\times} \ar@{-}[r]_{i''} & {\times} \ar@{-}[ul]_k
}
\end{align*}
where $\times$ denotes the puncture.
The sets $\{i',i''\}$ and $\{j',j''\}$ are disjoint since otherwise
the triangulation or its flip at $k$ shown in the right picture would
contain self-folded triangles. Hence we are in one of the situations
depicted in Table~\ref{t:nosegments}, and we only need to verify the
constraints on the neighborhood of $k$ of the corresponding quivers.

Indeed, when $g=1$ (the once punctured torus) we get the quiver as in
case~(4a). Otherwise,  the fact that there are more than three arcs
incident to the puncture implies that the quivers and constraints
on their numbers of arrows are as shown in Table~\ref{t:nosegments}.

In case~(4c), for example, in the counterclockwise order around the
puncture, the arc $j_1$ does not immediately follow $i_1$ (top left),
$i_2$ does not follow $j_1$ (bottom left), $j_2$ does not follow $i_2$
(bottom right) and $i_1$ does not follow $j_2$ (top right), hence there
are arrows $j_1 \to i_1$ and $j_2 \to i_2$ coming from the triangles
containing $k$ but no arrows $j_1 \to i_2$ or $j_2 \to i_1$,
leading to the quiver with constraints as in Table~\ref{t:nosegments}.
\end{proof}

In particular we see that the number of arrows of a quiver in a class
$\cQ_{g,0}$ is twice the number of its vertices. This implies the
following corollary, thus completing the proof of
Theorem~\ref{t:const}.

\begin{cor}
Any quiver in $\cQ_{g,0}$ has $12g-6$ arrows.
\end{cor}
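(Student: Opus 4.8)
The plan is to combine two ingredients already in place: a count of the vertices of $Q$ and a count of the arrows emanating from each vertex. First I would observe that the vertices of $Q$ are in bijection with the arcs of the triangulation from which $Q$ arises. By \cite[Prop.~2.10]{FST08} any triangulation of a closed surface of genus $g>0$ with one puncture consists of exactly $6g-3$ arcs (a fact already invoked in the proof of the preceding proposition to rule out self-folded triangles); hence $Q$ has exactly $6g-3$ vertices.

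Next I would invoke the preceding proposition, which asserts that every vertex of $Q$ has out-degree exactly $2$. Since the total number of arrows of any quiver equals the sum over all its vertices of the out-degrees (each arrow is counted exactly once, at its tail), the number of arrows of $Q$ is $2 \cdot (6g-3) = 12g-6$, as claimed. This is precisely the observation recorded just before the statement, namely that the number of arrows is twice the number of vertices.

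I do not expect a genuine obstacle here, since all the substantive work has already been done in the proposition: ruling out self-folded triangles, identifying the four admissible neighborhoods of Table~\ref{t:nosegments}, and checking that each forces both in-degree and out-degree equal to $2$. The corollary itself is a bookkeeping step. The only point that warrants a moment's care is to confirm that summing out-degrees (rather than, say, double-counting) returns the correct total; as a consistency check one may instead sum the in-degrees, which are likewise all equal to $2$, and recover the same value $12g-6$.
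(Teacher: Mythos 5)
Your proof is correct and coincides with the paper's (implicit) argument: the paper derives the corollary from the preceding proposition (in-degree and out-degree both $2$ at every vertex, so the number of arrows is twice the number of vertices) together with the count of $6g-3$ arcs from \cite[Prop.~2.10]{FST08}. The bookkeeping step of summing out-degrees is exactly what the paper intends by ``the number of arrows is twice the number of vertices.''
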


Next we provide explicit members from these classes in a similar way to
our treatment in~\cite[\S 3]{Ladkani11}.
Draw the fundamental polygon with $4g$ sides labeled
$1,2,1,2,\dots,2g-1,2g,2g-1,2g$ corresponding to a surface of genus
$g$, and identify the puncture with its vertices (recall that they are
all being identified on the surface). Any triangulation of the $4g$-gon
gives rise to a triangulation of the punctured (closed) surface of
genus $g$.

By taking triangulations as in Figure~\ref{fig:Tg0}, one gets explicit
such quivers which are shown in Figure~\ref{fig:Qg0}. They are built,
for $g>1$, by gluing two kinds of building blocks given in
Figure~\ref{fig:blocks}. Each block corresponds to a pair of
consecutive labels $2i-1, 2i$ of sides in the $4g$-gon and is obtained
by considering all the triangles incident to these sides. The left
block arises from the initial and terminal pairs of labels $\{1,2\}$
and $\{2g-1,2g\}$ whereas the right one arises from all other pairs
$\{2i-1,2i\}$.

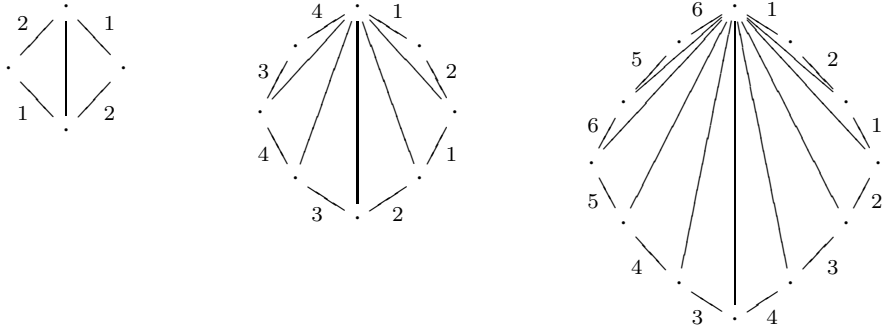
\begin{figure}
\begin{align*}
\xymatrix@=1pc{
& {\cdot} \ar@{-}[dr]^1 \ar@{-}[dd] \\
{\cdot} \ar@{-}[ur]^2 & & {\cdot} \ar@{-}[dl]^2 \\
& {\cdot} \ar@{-}[ul]^1
}
& &
\xymatrix@=0.3pc{
& && {\cdot}
\ar@{-}[drr]^1 \ar@{-}[dddrrr] \ar@{-}[dddddrr] \ar@{-}[dddddd]
\ar@{-}[dddlll] \ar@{-}[dddddll] \\
& {\cdot} \ar@{-}[urr]^4 && && {\cdot} \ar@{-}[ddr]^2 \\ \\
{\cdot} \ar@{-}[uur]^3 && && && {\cdot} \ar@{-}[ddl]^1 \\ \\
& {\cdot} \ar@{-}[uul]^4 && && {\cdot} \ar@{-}[dll]^2 \\
& && {\cdot} \ar@{-}[ull]^3
}
& &
\xymatrix@=0.2pc{
& && && {\cdot}
\ar@{-}[dddrrrr] \ar@{-}[dddddrrrrr] \ar@{-}[dddddddrrrr]
\ar@{-}[dddddddddrr] \ar@{-}[dddddddddd]
\ar@{-}[dddllll] \ar@{-}[dddddlllll] \ar@{-}[dddddddllll]
\ar@{-}[dddddddddll] \\
& && {\cdot} \ar@{-}[urr]^6 && && {\cdot} \ar@{-}[ull]_1 \\ \\
& {\cdot} \ar@{-}[uurr]^5 && && && && {\cdot} \ar@{-}[uull]_2 \\ \\
{\cdot} \ar@{-}[uur]^6 & && && && && & {\cdot} \ar@{-}[uul]_1 \\ \\
& {\cdot} \ar@{-}[uul]^5 && && && && {\cdot} \ar@{-}[uur]_2 \\ \\
& && {\cdot} \ar@{-}[uull]^4 && && {\cdot} \ar@{-}[uurr]_3 \\
& && && {\cdot} \ar@{-}[ull]^3 \ar@{-}[urr]_4
}
\end{align*}
\caption{Triangulations of closed surfaces of genus $g$ with one
puncture, for $g=1,2,3$. Arcs having the same label are identified.}
\label{fig:Tg0}
\end{figure}

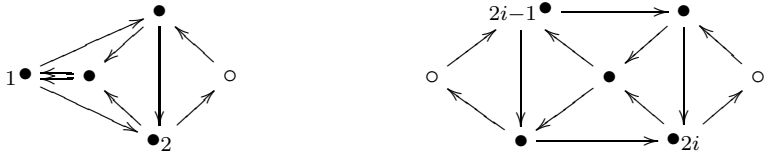
\begin{figure}
\begin{align*}
\xymatrix@=1pc{
& & {\bullet} \ar[dl] \ar[dd] \\
{_1\bullet} \ar[urr] \ar[drr] & {\bullet} \ar@/^0.1pc/[l] \ar@/_0.1pc/[l]
& & {\circ} \ar[ul] \\
& & {\bullet_2} \ar[ul] \ar[ur]
}
& &
\xymatrix@=1pc{
& {_{2i-1}\bullet} \ar[rr] \ar[dd] & & {\bullet} \ar[dl] \ar[dd] \\
{\circ} \ar[ur] & & {\bullet} \ar[ul] \ar[dl] & & {\circ} \ar[ul] \\
& {\bullet} \ar[ul] \ar[rr] & & {\bullet_{2i}} \ar[ul] \ar[ur]
}
\end{align*}
\caption{Two building blocks for quivers in $\cQ_{g,0}$ ($g>1$) arising
from triangulations as in Figure~\ref{fig:Tg0}. Gluing points are
marked with $\circ$.} \label{fig:blocks}
\end{figure}

The close connection between triangulations of the $4g$-gon and quivers
in $\cQ_{g,0}$ can be expressed more precisely in terms of mutation
classes of the Dynkin diagrams $A_n$. Let $n \geq 1$ and denote by
$A_n$ the quiver with $n$ vertices labeled $1,2,\dots,n$ and arrows $i
\to i+1$ for $1 \leq i < n$. The quivers in the mutation class of $A_n$
are those arising from triangulations of the $(n+3)$-gon (disc with no
punctures and $n+3$ marked points on its boundary), see~\cite{CCS06}.
They have been explicitly described in~\cite{BuanVatne08}.

\begin{prop}
Let $g,n>0$. Then a quiver in the mutation class of $A_n$ is
a full subquiver of some quiver in the class $\cQ_{g,0}$ if
and only if $n \leq 4g-3$.
\end{prop}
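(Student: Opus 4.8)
The plan is to exploit the explicit correspondence described just above between triangulations of the $4g$-gon and of the once-punctured closed surface $\Sigma$ of genus $g$, and then to settle the sharp bound by a double count on the triangles of a triangulation of $\Sigma$. The observation underlying both directions is the following: if $Q \in \cQ_{g,0}$ arises from a triangulation $T$ of $\Sigma$ obtained by gluing a triangulation of the $4g$-gon, then among the $6g-3$ arcs of $T$ there are $4g-3$ coming from the diagonals and $2g$ coming from the (identified) boundary edges, and the full subquiver of $Q$ on the $4g-3$ diagonals is exactly the quiver of the $4g$-gon triangulation, a representative of the mutation class of $A_{4g-3}$. Indeed an arrow between two diagonals records that they are consecutive sides of a common triangle; the boundary identification neither merges two diagonals nor merges two triangles, and two diagonals of a polygon never share two triangles, so no $2$-cycle cancellation among the diagonals is created upon gluing.

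For the implication $n \le 4g-3 \Rightarrow$ (realizability), I would take an arbitrary representative $R$ of the mutation class of $A_n$ and realize it, by the cited description, as the quiver of a triangulation $U$ of an $(n+3)$-gon. Since $n+3 \le 4g$, I embed this $(n+3)$-gon as a sub-polygon of the $4g$-gon cut off by a single diagonal and extend $U$ to a triangulation $\widehat{U}$ of the $4g$-gon. The triangles among the $n$ diagonals of $U$ are precisely the internal triangles of $U$, so the full subquiver on them is $R$; gluing $\widehat U$ onto $\Sigma$ and invoking the observation above then exhibits $R$ as a full subquiver of the resulting $Q \in \cQ_{g,0}$.

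For the converse I would count. Let $R$, with vertex set $S$ and $|S| = n$, be a full subquiver of some $Q \in \cQ_{g,0}$ arising from a triangulation $T$, and let $t_j$ denote the number of triangles of $T$ exactly $j$ of whose three sides lie in $S$. Since $\Sigma$ has a single vertex (the puncture) and Euler characteristic $2-2g$, the triangulation $T$ has $4g-2$ triangles, so $t_3+t_2+t_1+t_0 = 4g-2$. Because every vertex of $Q$ has in-degree and out-degree $2$ with all $12g-6$ arrows accounted for by the $4g-2$ triangular $3$-cycles (no cancellation), each arc of $S$ lies on exactly two triangles and each arrow of $R$ is supplied by a unique triangle, giving $3t_3 + 2t_2 + t_1 = 2n$ and $E(R) = 3t_3 + t_2$, where $E(R)$ is the number of arrows of $R$. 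On the other hand a quiver mutation-equivalent to $A_n$ is connected and its underlying graph has first Betti number equal to its number $c$ of $3$-cycles (edge-disjoint triangles), so $E(R) = (n-1)+c$. Granting the key identity $c = t_3$, eliminating $E(R)$ yields $n-1 = 2t_3 + t_2$, hence $t_1 = t_3+2$ and $2t_3 + t_2 + t_0 = 4g-4$; as $t_0 \ge 0$ this gives $n - 1 = 2t_3 + t_2 \le 4g-4$, i.e. $n \le 4g-3$.

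The step I expect to be the main obstacle is precisely this identity $c = t_3$: that the $3$-cycles of the type-$A$ subquiver $R$ are exactly the triangles of $T$ contained in $S$. The inequality $c \ge t_3$ is easy, since distinct triangles inside $S$ give distinct $3$-cycles of $R$ (for $g \ge 2$ no two triangles of $T$ share all three sides, as that would split off a sphere). The reverse inequality $c \le t_3$ is delicate: one must show $R$ has no \emph{non-facial} $3$-cycle $a \to b \to c \to a$ assembled from three different triangles of $T$, i.e. that three arcs incident to the puncture inducing a single (undoubled) $3$-cycle cannot bound a region properly containing further triangles. I would establish this by a local analysis at the puncture, using that each vertex has in-degree and out-degree exactly $2$, so the two faces at $b$ force the arrows $a \to b$ and $b \to c$ into a common triangle unless a configuration incompatible with the type-$A$ structure (a double arrow or a longer cycle) arises; the degenerate case $g=1$, where $4g-3 = 1$ and the only triangulation gives the Markov quiver so that just $A_1$ occurs, I would treat directly.
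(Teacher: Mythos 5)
Your forward direction ($n\le 4g-3$ implies realizability) is sound and is essentially the paper's own argument: triangulate the $(n+3)$-gon, extend to the $4g$-gon, glue, and note that arrows among diagonals are unchanged. The genuine gap is exactly where you place it: the inequality $c\le t_3$ (your algebra in fact only needs $c\le t_3+t_0$, but you prove neither), and your sketched local argument for it cannot be completed. Consider three pairwise distinct triangles $\tau_1$, $\tau_2$, $\tau_3$ of $T$ with sides $(a,b,x)$, $(b,c,y)$, $(c,a,z)$ respectively, oriented so that $\tau_1$ induces $a\to b$, $\tau_2$ induces $b\to c$ and $\tau_3$ induces $c\to a$. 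Since each pair of the arcs $a,b,c$ lies in exactly one common triangle, the full subquiver of $Q$ on $\{a,b,c\}$ is then \emph{precisely} the oriented $3$-cycle: no double arrow, no longer cycle, no forbidden vertex degrees --- a perfectly good representative of the mutation class of $A_3$ (fullness only forces $x,y,z\notin S$). So no ``configuration incompatible with the type-$A$ structure'' arises, and a local analysis at $b$ has no contradiction to find. What actually excludes this configuration is global: following the link of the puncture, the corner of $\tau_1$ between $a$ and $b$ is succeeded by the corner of $\tau_2$ between $b$ and $c$ (as $\tau_2$ is the other triangle containing $b$), then by the corner of $\tau_3$ between $c$ and $a$, and then one is back at the start; the link of the puncture would thus close up after only three corners, whereas the unique puncture has all $12g-6>3$ corners on a single link circle. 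Without this (or an equivalent) topological argument, your key lemma --- and hence the whole converse --- remains unproven.

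It is worth contrasting this with the paper's route, which avoids the facial-versus-non-facial issue entirely. The paper observes that if $Q'$ is a full subquiver of $Q$, then $\mu_k(Q')$ is a full subquiver of $\mu_k(Q)$ for any vertex $k$ of $Q'$; hence one may assume the type-$A$ quiver is the linearly oriented $A_{4g-2}$ itself. Then, since every vertex of $Q\in\cQ_{g,0}$ has in-degree and out-degree $2$, and by fullness the only arrows of $Q$ among the chain vertices are the $4g-3$ chain arrows, $Q$ must contain at least $3(4g-2)+1=12g-5$ distinct arrows, contradicting the fact that $Q$ has $12g-6$ arrows. Your Euler-characteristic bookkeeping with the $t_j$ is internally consistent and could be salvaged by proving $c\le t_3$ via the link-of-the-puncture argument above, but the reduction-by-mutation trick makes all of that machinery unnecessary.
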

\begin{proof}
We show first that any quiver $Q'$ in the mutation class of $A_{4g-3}$
is a full subquiver of some quiver in $\cQ_{g,0}$. Indeed, by taking
the fundamental $4g$-gon of a genus $g$ surface we see that any
triangulation of the $4g$-gon corresponding to $Q'$ yields a
triangulation of the closed genus $g$ surface with one puncture, giving
a quiver $Q \in \cQ_{g,0}$ containing $Q'$ as a full subquiver. For
example, for each of the quivers in Figure~\ref{fig:Qg0}, its full
subquiver on the vertices not corresponding to the sides of the
$4g$-gon is the quiver $A_{4g-3}$.

Conversely, we show that no quiver in the mutation class of $A_{4g-2}$
is a full subquiver of a quiver in $\cQ_{g,0}$. Observe that if $Q'$ is
a full subquiver of $Q$, then by performing a mutation at a vertex $k$
of $Q'$ we see that $\mu_k(Q')$ is a full subquiver of $\mu_k(Q)$.
Hence it is enough to prove the claim for the quiver $A_{4g-2}$ itself.

Assume to the contrary that $A_{4g-2}$ is a full subquiver on the
vertices $1,2,\dots,4g-2$ of some quiver $Q \in \cQ_{g,0}$. Since each
vertex of $Q$ has two incoming and two outgoing arrows, we deduce that
$Q$ must contain the following arrows:
\[
\xymatrix{
& & & & \\
\ar[r] & {\bullet_1} \ar[r] \ar[u] & {\bullet_2} \ar[r] \ar[u]
& {\ldots} \ar[r] & {\bullet_{4g-2}} \ar[r] \ar[u] & \\
& \ar[u] & \ar[u] & & \ar[u]
}
\]
Counting them, we get $3(4g-2)+1 = 12g-5$, contradicting the fact that
$Q$ has only $12g-6$ arrows.
\end{proof}

\section{Interpretation via derived equivalences}
\label{sec:dereq}

In this section we explain how to interpret, in certain cases,
mutations of quivers with potentials preserving the number of arrows as
derived equivalences of the corresponding Jacobian algebras. This
interpretation which naturally holds for reflections, is valid also for
mutations of quivers arising from triangulations of unpunctured
surfaces as well as for the quivers in the classes $\cQ_{g,0}$.

Throughout this section, we fix a field $K$. Recall that two
$K$-algebras are called \emph{derived equivalent} if their module
categories have equivalent derived categories. Derived equivalent
algebras share many homological properties, and we refer the reader to
the survey~\cite{Keller07} for further details on tilting theory and
derived equivalence.

Recall that the \emph{path algebra} $KQ$ of a quiver $Q$ has a basis
consisting of the paths in $Q$, and the product of any two paths is
their concatenation, if defined, and zero otherwise. A \emph{quiver
with potential} (QP) is a pair $(Q,W)$ where $Q$ is a quiver (under our
assumption throughout the paper, it does not have loops and $2$-cycles)
and $W$ is a \emph{potential}, which we assume to be polynomial, i.e.\
a linear combination of cycles in $KQ$. In~\cite{DWZ08} the authors
have defined the notion of mutation of QP at a vertex. Under certain
conditions, it extends the notion of mutation of quivers. The
(non-completed) \emph{Jacobian algebra} of $(Q,W)$, denoted by
$\cP(Q,W)$, is the quotient of $KQ$ by the ideal generated by the
cyclic derivatives of $W$ with respect to all the arrows,
see~\cite{DWZ08}.

\subsection{Reflections}

We have already mentioned that \emph{reflection}, that is, mutation at
a sink or a source, preserves the number of arrows in the quiver.
Reflection has also a well-known representation theoretic
interpretation as derived equivalence. Namely, if $(Q,W)$ is any QP and
$k$ is a sink or a source of $Q$, then the Jacobian algebras $\cP(Q,W)$
and $\cP(\mu_k(Q,W))$ are derived equivalent. Indeed, one takes the
left complex (when $k$ is sink) or the right one (when it is a source)
of finitely generated right projective $\cP(Q,W)$-modules
\begin{align}
\tag{$\star$} \label{e:APR} \bigl( P_k \xrightarrow{f} \bigoplus_{j \to
k} P_j \bigr) \oplus \bigl(\bigoplus_{i \neq k} P_i \bigr) & & \bigl(
\bigoplus_{k \to j} P_j \xrightarrow{g} P_k \bigr) \oplus
\bigl(\bigoplus_{i \neq k} P_i \bigr)
\end{align}
where $P_i$ denotes the projective module corresponding to $i$ spanned
by all paths starting at $i$, the map $f$ (respectively, $g$) is
induced by all the arrows ending (respectively, starting) at $k$, and
the terms $P_i$ for $i \neq k$ lie in degree $0$. This is a tilting
complex over $\cP(Q,W)$ whose endomorphism algebra is isomorphic to
$\cP(\mu_k(Q,W))$, so by Rickard's theorem~\cite{Rickard89} the two
algebras are derived equivalent. In the finite-dimensional case this is
an instance of APR-tilt~\cite{APR79} generalizing the BGP
reflections~\cite{BGP73} between path algebras of quivers without
oriented cycles.

\subsection{Quivers from unpunctured surfaces}

For quivers arising from triangulations of marked bordered surfaces,
potentials have been defined by Labardini-Fragoso~\cite{Labardini09} in
such a way that flips of triangulations correspond to mutations of the
associated QP. When the surface has no punctures, these potentials are
sums of the oriented $3$-cycles in the quiver which are induced by the
internal triangles of the triangulation, and the corresponding Jacobian
algebras are the finite-dimensional gentle algebras introduced by
Assem, Br\"{u}stle, Charbonneau-Jodoin and Plamondon~\cite{ABCP10}.

In general, the number of arrows in the quiver of any gentle algebra is
invariant under derived equivalence~\cite{AvellaAlaminosGeiss08}, but
for the gentle algebras arising from triangulations we can actually say
more; by combining Proposition~\ref{p:in1out1} and our previous
work~\cite[\S 2]{Ladkani11} we deduce that for any single mutation of a
QP arising from a triangulation of a marked unpunctured surface, the
condition that the number of arrows is preserved is equivalent to the
derived equivalence of the Jacobian algebras.

\begin{theorem} \label{t:goodmut}
Let $(Q,W)$ be a QP arising from a triangulation of marked bordered
surface without punctures. For any vertex $k$ of $Q$, the following
conditions are equivalent:
\begin{enumerate}
\renewcommand{\theenumi}{\roman{enumi}}
\item
The numbers of incoming and outgoing arrows at $k$ are not both equal
to $1$;

\item
$Q$ and its mutation $\mu_k(Q)$ have the same number of arrows;

\item
The algebras $\cP(Q,W)$ and $\cP(\mu_k(Q,W))$ are derived equivalent.
\end{enumerate}
\end{theorem}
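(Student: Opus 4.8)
The plan is to prove the equivalence of the three conditions by establishing the chain (i) $\Leftrightarrow$ (ii) and (ii) $\Leftrightarrow$ (iii), exploiting that condition (i) is just the negation of ``in-degree $1$ and out-degree $1$.'' The equivalence (i) $\Leftrightarrow$ (ii) is immediate from Proposition~\ref{p:in1out1}, which already asserts that for a quiver arising from a triangulation of an unpunctured marked surface, a vertex $k$ has in-degree $1$ and out-degree $1$ precisely when mutation at $k$ changes the number of arrows. Negating both sides gives exactly (i) $\Leftrightarrow$ (ii), so no further work is needed there.

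The substance of the proof lies in (ii) $\Leftrightarrow$ (iii). Here I would invoke the structural fact, recalled just before the theorem, that for unpunctured surfaces the potential $W$ is the sum of the oriented $3$-cycles coming from internal triangles, so that each $\cP(Q,W)$ is a finite-dimensional gentle algebra. The implication (iii) $\Rightarrow$ (ii) is then the cleanest direction: by the result of Avella-Alaminos and Geiss~\cite{AvellaAlaminosGeiss08}, the number of arrows of a gentle algebra is a derived invariant, so if $\cP(Q,W)$ and $\cP(\mu_k(Q,W))$ are derived equivalent then $Q$ and $\mu_k(Q)$ must have the same number of arrows. For the converse (ii) $\Rightarrow$ (iii), I would argue that whenever the number of arrows is preserved, the QP-mutation at $k$ can be realized as a derived equivalence. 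The idea is to produce a tilting complex over $\cP(Q,W)$ whose endomorphism algebra is $\cP(\mu_k(Q,W))$, in the spirit of the reflection case treated with the complexes in~(\ref{e:APR}), but now using the explicit local picture of the neighborhood of $k$ furnished by Table~\ref{t:segments}. Since the only cases with in/out-degrees not both $1$ are the arrow-preserving ones, I would go case-by-case through the relevant rows of the table, in each case writing down a two-term complex of projectives adapted to the local configuration and checking that it is tilting with the correct endomorphism algebra.

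I expect the main obstacle to be the construction of the tilting complex in the arrow-preserving cases that are \emph{not} reflections, since there the naive one-step complex~(\ref{e:APR}) need not apply and one must build a complex tailored to the local neighborhood. The cleanest route is likely to appeal to the existing literature on derived equivalences for gentle algebras and for Jacobian algebras of surface triangulations: flips of triangulations are known to be related by tilting, and the arrow-count condition should single out exactly those flips that preserve the derived equivalence class. Concretely I would try to exhibit, for each non-reflection case in Table~\ref{t:segments} where the arrow count is preserved, a silting or tilting object whose mutation corresponds to the flip at $k$, and verify its tilting property by a direct computation of $\mathrm{Hom}$ and $\mathrm{Ext}$ groups using the combinatorics of the gentle algebra (its permitted and forbidden paths). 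The remaining routine check is that the endomorphism ring of this complex is isomorphic to the Jacobian algebra of the mutated QP, which follows from the compatibility of Labardini-Fragoso's potentials~\cite{Labardini09} with flips.

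Should a uniform tilting construction prove elusive, the fallback is to reduce everything to the reflection case: one shows that any arrow-preserving mutation at a vertex of in/out-degree $(2,2)$ or the other admissible local types can, after a sequence of reflections bringing $k$ into a sink or source configuration, be transported to a situation covered by the complexes in~(\ref{e:APR}). This relies on the derived invariance already established for reflections and on the fact that the derived equivalence class is unchanged under such reductions; the arrow-count hypothesis~(ii) is precisely what guarantees that no obstruction appears along the way.
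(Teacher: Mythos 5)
Your proposal matches the paper's proof in all three steps: (i)$\Leftrightarrow$(ii) is exactly Proposition~\ref{p:in1out1}, (iii)$\Rightarrow$(ii) is the Avella-Alaminos--Geiss derived invariance of the number of arrows for gentle algebras (the Jacobian algebras being gentle by~\cite{ABCP10}), and (ii)$\Rightarrow$(iii) is realized by two-term tilting complexes of the type~\eqref{e:APR} adapted to the local neighborhood of $k$. The only difference is that the paper delegates this last step to~\cite[\S 2]{Ladkani11} instead of carrying out the case-by-case tilting verification you sketch, so your plan amounts to reproving the cited result by hand (and note that the complexes~\eqref{e:APR} can in fact be formed at any vertex --- the issue in the non-reflection cases is only whether they are tilting, which is what the arrow-count condition guarantees for at least one of them).
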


In particular, using Theorem~\ref{t:const} we get another proof of the
following result.

\begin{cor}[\cite{Ladkani11}]
Each of the classes $\cQ_{g,b}$ when $b>0$ can be regarded as mutation
class of QP whose Jacobian algebras are all derived equivalent.
\end{cor}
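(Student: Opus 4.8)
The plan is to realize each $\cQ_{g,b}$ with $b>0$ as a mutation class of QP and then argue that every single QP mutation within it preserves the derived equivalence class of the Jacobian algebra, so that by transitivity all these algebras are derived equivalent. First I would fix a base triangulation of the underlying unpunctured surface of genus $g$ with $b$ boundary components and one marked point on each, equipping its quiver $Q_0$ with the Labardini-Fragoso potential $W_0$, namely the sum of the oriented $3$-cycles induced by the internal triangles. Since the surface has no punctures, I invoke Labardini-Fragoso's result~\cite{Labardini09} that flips of triangulations correspond to mutations of the associated QP; consequently every quiver in $\cQ_{g,b}$ is the quiver of a QP obtained from $(Q_0,W_0)$ by a finite sequence of QP mutations, and $\cQ_{g,b}$ is thereby promoted to a mutation class of QP.

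The heart of the argument is the observation that condition~(i) of Theorem~\ref{t:goodmut} is satisfied at \emph{every} vertex of \emph{every} quiver in the class. Indeed, by Lemma~\ref{l:Qgb:in1out1}, no quiver $Q \in \cQ_{g,b}$ with $b>0$ has a vertex of in-degree $1$ and out-degree $1$, which is precisely the negation of condition~(i). Applying the equivalence (i)~$\Leftrightarrow$~(iii) of Theorem~\ref{t:goodmut}, I conclude that mutating the QP at any vertex $k$ produces a Jacobian algebra derived equivalent to the original one. In other words, \emph{each single step} in the mutation class is a derived equivalence.

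It then remains to chain these single-step equivalences together. Any two QP in the class are joined by a finite sequence of mutations; at each intermediate stage the QP again arises from a triangulation of the same surface, so its quiver again lies in $\cQ_{g,b}$ and again has no vertex of in-degree $1$ and out-degree $1$ by Lemma~\ref{l:Qgb:in1out1}. Hence Theorem~\ref{t:goodmut} applies afresh at every step, and since derived equivalence is an equivalence relation and in particular transitive, the Jacobian algebras of all members of the class are derived equivalent.

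I expect the only delicate point to be the stability of the construction under mutation: one must be sure that mutating the QP does not leave the family, that is, that the mutated potential is again right-equivalent to the canonical Labardini-Fragoso potential of the flipped triangulation, so that Theorem~\ref{t:goodmut} remains applicable at the next step. This is exactly what the flip/mutation compatibility of~\cite{Labardini09} guarantees, and once it is in hand the remainder of the proof is the formal transitivity argument above.
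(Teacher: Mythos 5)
Your proposal is correct and takes essentially the same route as the paper: the paper deduces the corollary from Theorem~\ref{t:goodmut} together with Theorem~\ref{t:const} (i.e.\ condition~(ii), constancy of the number of arrows), while you enter Theorem~\ref{t:goodmut} through condition~(i) via Lemma~\ref{l:Qgb:in1out1} --- but these are interchangeable by that very theorem, and the paper's own proof of constancy for $b>0$ runs through the same lemma. Your attention to the flip/mutation compatibility of~\cite{Labardini09} and to transitivity makes explicit what the paper leaves implicit, but it is the same argument.
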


\begin{remark}
Combining Theorem~\ref{t:goodmut} with the recent computation by
David-Roesler and Schiffler in~\cite{DavidRoeslerSchiffler11} of the
Avella-Alaminus-Geiss derived invariants~\cite{AvellaAlaminosGeiss08}
allows for a description of the derived equivalence classes of the
Jacobian algebras of QP arising from triangulations of marked
unpunctured surfaces in terms of the properties of the corresponding
triangulations, generalizing the derived equivalence classifications of
cluster-tilted algebras of Dynkin type $A$~\cite{BuanVatne08} and
affine type $\tilde{A}$~\cite{Bastian09}. This is a subject of further
investigations.
\end{remark}

\begin{remark}
The statement of Theorem~\ref{t:goodmut} does not hold for QP in
general, not even for those arising from triangulations of marked
surfaces with some punctures. For example, the following picture shows
certain quivers arising from triangulations of the once-punctured
pentagon (i.e.\ a disc with one puncture and five marked points on its
boundary). The corresponding Jacobian algebras are cluster-tilted
algebras of type $D_5$~\cite{Schiffler08}.
\begin{align*}
\begin{array}{c}
\xymatrix@=0.3pc{
& {\bullet} \ar[ddr] \\ \\
{\bullet} \ar[ddrr] \ar[uur] & & {\bullet} \ar[ll] \\ \\
{\bullet_1} \ar[uu] & & {\bullet} \ar[ll] \ar[uu]
}
\\
(a)
\end{array}
& &
\begin{array}{c}
\xymatrix@=0.3pc{
& {\bullet} \ar[ddr] \\ \\
{\bullet} \ar[uur] \ar[dd] & & {\bullet_2} \ar[ll] \\ \\
{\bullet_1} \ar[rr] & & {\bullet} \ar[uu]
}
\\
(b)
\end{array}
& &
\begin{array}{c}
\xymatrix@=0.3pc{
& {\bullet} \\ \\
{\bullet} \ar[rr] \ar[dd] & & {\bullet_2} \ar[uul] \ar[dd] \\ \\
{\bullet} \ar[rr] & & {\bullet} \ar[uull]
}
\\
(c)
\end{array}
\end{align*}

The quivers in~(a) and~(b) are related by mutation at the vertex $1$.
They have different numbers of arrows but the corresponding algebras
are derived equivalent. The quivers in~(b) and~(c) are related by
mutation at the vertex $2$. They have the same number of arrows but the
corresponding algebras are not derived equivalent, see~\cite{BHL10}.
\end{remark}

\begin{remark}
Furthermore, there are mutation classes consisting of finitely many QP
whose Jacobian algebras are finite-dimensional and derived equivalent,
but the quivers themselves have varying numbers of arrows. Examples are
the mutation classes of the exceptional quivers $E^{(1,1)}_6$ and
$X_6$, see~\cite{Ladkani11}.
\end{remark}

\subsection{The classes $\cQ_{g,0}$}

For a quiver $Q$ in the classes $\cQ_{g,0}$ arising from triangulations
of closed surfaces with one puncture, the potential defined
in~\cite{Labardini09} is the sum of two terms $W_\Delta + x_p W_p$ for
some $0 \neq x_p \in K$. The term $W_\Delta$ is the sum of the oriented
$3$-cycles corresponding to the triangles comprising the triangulation,
just as in the unpunctured case. The term $W_p$ is the oriented cycle
induced by traversing the arcs of the triangulation in a
counter-clockwise order at the puncture $p$. Thus, any arrow in $Q$
appears in the cycle $W_p$ exactly once.

Since there are never self-folded triangles in a triangulation of a
once punctured closed surface (cf.\ Section~\ref{sec:Qg0}), the
argument in~\cite{Labardini09} showing that flips of triangulations
correspond to mutations of the associated QP is applicable also when we
set $x_P=0$, see Cases~1 and~2 in the proof
of~\cite[Theorem~30]{Labardini09}. Hence, the association of the
potential $W=W_\Delta$ to the quiver $Q$ is compatible with quiver
mutations, a fact which can be also verified directly by considering
the local neighborhoods in Table~\ref{t:nosegments}. The potential $W$
is thus non-degenerate but not rigid. In the special case of $g=1$, we
recover Example~8.6 of~\cite{DWZ08}. The Jacobian algebra of $(Q,W)$
satisfies all the conditions in the definition of a gentle algebra,
except that it is \emph{infinite}-dimensional.

\begin{prop}
Let $Q \in \cQ_{g,0}$ for some $g>0$ and let $W$ be the associated
potential. Then, for any vertex $k$ of $Q$ the two complexes given
by~\eqref{e:APR} are tilting complexes over $\cP(Q,W)$ and their
endomorphism algebras are both isomorphic to $\cP(\mu_k(Q,W))$.
\end{prop}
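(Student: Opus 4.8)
The plan is to verify directly that each of the two one-term complexes in \eqref{e:APR} is a tilting complex over $\cP(Q,W)$ and to compute its endomorphism algebra, following the same strategy that works for reflections but now accounting for the fact that $k$ is neither a sink nor a source. Recall from the preceding proposition that every vertex $k$ of $Q$ has in-degree and out-degree both equal to $2$, so its local neighborhood is one of the four cases in Table~\ref{t:nosegments}. I would first fix the complex, say the right one $T = \bigl(\bigoplus_{k \to j} P_j \xrightarrow{g} P_k\bigr) \oplus \bigl(\bigoplus_{i \neq k} P_i\bigr)$, with $P_k$ in degree $0$ and the sum over the two arrows out of $k$ in degree $-1$. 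To show $T$ is tilting I must establish (a) that $\operatorname{Hom}(T, T[n]) = 0$ for all $n \neq 0$, and (b) that $T$ generates the perfect derived category $\operatorname{K}^{\mathrm{b}}(\mathrm{proj}\,\cP(Q,W))$ as a triangulated category.

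For the self-orthogonality (a), the key input is the relations coming from the potential $W = W_\Delta$. Since every arrow lies in exactly one oriented $3$-cycle of $W_\Delta$, the cyclic derivatives give, for each arrow $\alpha$ completing a $3$-cycle $\alpha\beta\gamma$, the relation $\partial_\alpha W = \beta\gamma$, so that paths of length two closing up a triangle are annihilated. I would use this to analyze the map $g$ explicitly in each of the four neighborhood types: the composite of $g$ with the two incoming arrows at $k$ lands in the relations, forcing the mapping cone computation to collapse exactly as in the sink/source case. Generation (b) is cheap once one observes that the cone of $g$ recovers a shifted copy of $P_k$ up to the projectives already present, so all indecomposable projectives lie in the thick subcategory generated by $T$.

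The endomorphism algebra computation is where the combinatorics of Table~\ref{t:nosegments} does the real work. I would compute $\operatorname{End}(T)$ by describing its idempotents (one per summand, hence vertices in bijection with those of $\mu_k(Q)$) and its radical, reading off the arrows of the quiver of $\operatorname{End}(T)$ from the homomorphisms between the summands. The arrows not touching $k$ are unchanged; the new and reversed arrows at $k$ must be shown to match the mutation rule, and here I would lean on the local picture: the two arrows $k \to j$ become arrows into the new $P_k$-term, the arrows $i \to k$ get reversed via factorization through the mapping, and the $2$-cycles created by the ``add a path through $k$'' step are cancelled precisely by the relations from $W_\Delta$. The identification $\operatorname{End}(T) \cong \cP(\mu_k(Q,W))$ then follows by matching generators and relations, using that $W$ transforms under the mutation rule of~\cite{DWZ08} and that, as noted just before the proposition, setting $x_p = 0$ is compatible with mutation since no self-folded triangles occur.

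The main obstacle I expect is the endomorphism-algebra identification rather than the tilting property: I must check that the relations in $\operatorname{End}(T)$ coming from composing homomorphisms of complexes agree, case by case, with the Jacobian relations of the mutated potential $\mu_k(W)$, including the verification that the naive $2$-cycles are genuinely killed and do not survive as spurious arrows. Because the Jacobian algebra here is infinite-dimensional, I cannot appeal to finite-dimensional shortcuts such as dimension counts, so the comparison of relations has to be done at the level of the defining ideals. I would organize this as four explicit local computations corresponding to the rows of Table~\ref{t:nosegments}, each a direct but careful manipulation of paths modulo $\partial W$, and then invoke Rickard's theorem~\cite{Rickard89} to conclude derived equivalence of $\cP(Q,W)$ and $\cP(\mu_k(Q,W))$.
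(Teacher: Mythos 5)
Your strategy---proving directly that the two complexes are tilting and identifying their endomorphism algebras by generators and relations, organized over the four neighborhoods of Table~\ref{t:nosegments}---is a genuinely different route from the paper's. The paper's proof is a reduction: for $g=1$ the claim is checked directly, while for $g>1$ the situation is analyzed ``locally'' at the neighborhood of $k$ and reduced to the finite-dimensional gentle case of unpunctured surfaces already treated in~\cite[\S 2]{Ladkani11}. Your route is more self-contained, treats all four neighborhoods uniformly (including case~(4a), which has no unpunctured model and is exactly why the paper handles $g=1$ separately), and the infinite-dimensionality is indeed manageable, because the Jacobian ideal of $W_\Delta$ is \emph{monomial} (generated by the length-two paths completing triangle cycles), so $\cP(Q,W)$ has a basis of paths avoiding these relations and all the relevant Hom computations localize at $k$.

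However, as written your verification of self-orthogonality would fail, for two concrete reasons. First, the degree placement is wrong: you put $P_k$ in degree $0$ and $\bigoplus_{k \to j} P_j$ in degree $-1$, alongside the $P_i$ ($i \neq k$) in degree $0$. With this placement the complex is \emph{not} self-orthogonal: if $i_1 \xrightarrow{\delta_1} k \xrightarrow{\alpha_1} j_1 \xrightarrow{\beta_1} i_1$ is one of the two triangle cycles through $k$, then left multiplication by $\beta_1$ is a nonzero map $P_{i_1} \to P_{j_1}$ whose composite with $g$ is left multiplication by $\alpha_1\beta_1 = \partial_{\delta_1} W = 0$; since no homotopies are available in this degree, this is a nonzero element of $\operatorname{Hom}(T,T[-1])$. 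The placement that works (forced already in the hereditary source case) is $\bigoplus_{k \to j} P_j$ in degree $0$, i.e.\ the same degree as the other $P_i$, with $P_k$ in degree $1$, and dually for the left complex. Second, and relatedly, your proposed mechanism---``the composite of $g$ with the two incoming arrows at $k$ lands in the relations''---is not the right one, and taken literally it is false: each incoming arrow at $k$ composes to zero with exactly \emph{one} of the two outgoing arrows (the one in its own triangle), while its composite with the other outgoing arrow is a nonzero basis path. This gentle pairing is precisely what the Hom-vanishing rests on: it guarantees that an element whose composites with \emph{both} arrows of $g$ (resp.\ of $f$) vanish must itself be zero; if all four composites through $k$ were relations, as your phrasing suggests, self-orthogonality would fail by the same computation as above. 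Both issues are fixable, and with the corrected placement and pairing your case-by-case plan (as well as your generation argument and the endomorphism-algebra comparison) does go through.
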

\begin{proof}
For $g=1$, this can be checked directly. For $g>1$ one considers the
situation ``locally'' at the neighborhood of $k$ and reduces to the
finite-dimensional case which was treated in~\cite[\S 2]{Ladkani11}.
\end{proof}

\begin{remark}
The proposition implies that each class $\cQ_{g,0}$ with the above
associated potentials satisfies a stronger version of condition
$(\delta_3)$ that was stated in~\cite{Ladkani11}. In particular,
$\cQ_{g,0}$ can be regarded as mutation class of QP whose Jacobian
algebras are all derived equivalent.
\end{remark}

\begin{remark}
The statement of the proposition does not hold for the classes
$\cQ_{g,b}$ when $b>0$ despite the fact that all the Jacobian algebras
are derived equivalent. Indeed, in any quiver $Q \in \cQ_{g,b}$ with
$b>0$ there is at least one vertex $k$ whose in-degree and out-degree
are not both equal to $2$, and then one of the complexes
in~\eqref{e:APR} is not a tilting complex.
\end{remark}

\begin{remark}
By work of Keller and Yang~\cite[\S 6]{KellerYang11}, the statement of
the proposition holds for any quiver with potential $(Q,W)$ whose
Ginzburg dg-algebra has its cohomology concentrated in degree zero.
This assumption implies that the Jacobian algebra $\cP(Q,W)$ is
$3$-Calabi-Yau.

However, the Jacobian algebra of a quiver $Q \in \cQ_{g,0}$ with the
above associated potential is \emph{not} $3$-Calabi-Yau. Indeed, since
the potential $W$ we associate to $Q$ is a sum of cycles of the same
length, the Jacobian algebra $\cP(Q,W)$ is naturally graded. One can
thus check that it is not $3$-Calabi-Yau by computing its matrix
Hilbert series and using~\cite[Theorem~4.6]{Bocklandt08}.
\end{remark}

\bibliographystyle{amsplain}
\bibliography{arrows}

\end{document}